\newtheorem{theorem}{Theorem}
\newtheorem{corollary}{Corollary}
\newtheorem{definition}{Definition}
\newtheorem{example}{Example}
\newtheorem{lemma}{Lemma}
\newtheorem{proposition}[theorem]{Proposition}
\newenvironment{proof}[1][Proof]{\begin{trivlist}
\item[\hskip \labelsep {\bfseries #1}]}{\end{trivlist}}
\begin{document}

\title{Further Pieri-type formulas for the nonsymmetric Macdonald polynomial}
\author{W. Baratta \\
	Department of Mathematics, University of Melbourne}
	\maketitle

\begin{abstract}
The branching coefficients in the expansion of the elementary symmetric function multiplied by a symmetric Macdonald polynomial $P_\kappa(z)$ are known explicitly. These formulas generalise the known $r=1$ case of the Pieri-type formulas for the nonsymmetric Macdonald polynomials $E_\eta(z)$. In this paper we extend beyond the case $r=1$ for the nonsymmetric Macdonald polynomials, giving the full generalisation of the Pieri-type formulas for symmetric Macdonald polynomials. The decomposition also allows the evaluation of the generalised binomial coefficients $\tbinom{\eta }{\nu }_{q,t}$ associated with the nonsymmetric Macdonald polynomials.
\end{abstract}

\section{Introduction\label{introduction}} 
In 1988 Ian Macdonald \cite{macdonaldagain} introduced the symmetric Macdonald polynomials $P_\kappa(z;q,t)$, a polynomial generalising, for example, the Schur and symmetric Jack polynomials. Six years later a generalisation of the symmetric Macdonald polynomials, the nonsymmetric Macdonald polynomials, were introduced \cite{cherednik, affine}. 

The symmetric Macdonald polynomials can be generated from the nonsymmetric Macdonald polynomials by a process of symmetrisation. Consequently properties of the nonsymmetric Macdonald polynomials imply corresponding properties of the symmetric polynomials, and can be used to both illuminate and simplify the theory of the latter \cite{marshallmacdonald}.  

The converse however does not always hold. For example, the nonsymmetric analogue of the
Pieri-type formula \cite[Section VI. 6]{macdonald}
\begin{equation}
e_{r}( z) P_{\kappa }( z;q,t) =\sum_{\lambda
}\psi _{\lambda /\kappa }P_{\lambda }( z;q,t)  \label{pieri}
\end{equation}
giving the explicit form of the branching coefficients $\psi _{\lambda
/\kappa }$, for the product of $P_{\kappa }( z;q,t) $ with the $
r^{th}$ elementary symmetric function, 
\begin{equation*}
e_{r}( z) =\sum_{1\leq i_{1}<\ldots<i_{r}\leq
n}z_{i_{1}}\ldots z_{i_{r}} \label{rth elementary},
\end{equation*}
is only known in the cases $r=1$ and $r=n-1$ \cite{wendyb}. In $( \ref{pieri}) $ the sum
is over $\lambda $ such that $\lambda /\kappa $ is a vertical $m$-strip and $\psi _{\lambda
/\kappa }$ is given by
\begin{equation*} \label{therequiredeval}
\psi _{\lambda /\kappa }:=t^{n(\lambda)-n(\kappa)}\frac{P_\kappa(t^{\underline{\delta }})}{P_\lambda(t^{\underline{\delta }})}\prod_{1\leq i < j \leq n}\frac{1-q^{\kappa_{i}-\kappa_{j}}t^{j-i+\theta_i-\theta_j}}{1-q^{\kappa_i-\kappa_j}t^{j-i}},
\end{equation*}
where $\theta\lambda-\kappa$ and $n(\lambda)=\sum_i(i-1)\lambda_i$. We remark that the Pieri formulas have been of recent interest in studies of
certain vanishing properties of Macdonald polynomials at $t^{k+1}q^{r-1}=1$ \cite{japanese}. The dual of $\left( \ref{pieri}\right) $ has also
found application in the study of certain probabilistic models related to
the Robinson-Schensted-Knuth corresponence \cite{lastpeter}.

In this work we provide explicit formulas for the branching
coefficients in the general Pieri-type formulas
\begin{equation} \label{nonzero}
e_{r}\left( z\right) E_{\eta }\left( z;q^{-1 },t^{-1 }\right)=\sum_{\lambda:|\lambda|=|\eta|+r}A_{\eta\lambda}^{(r)}E_\lambda(z;q^{-1},t^{-1}).
\end{equation}
As in \cite{wendyb} the coefficients are obtained via exploitation of the theory of interpolation polynomials. The theory of the latter is revised in Section \ref{insm}, after an account of the required theory of nonsymmetric Macdonald polynomials.
Also given in Section \ref{insm} is an alternative derivation of the known \cite{knop} extra vanishing properties of the interpolation Macdonald polynomials that play a key role in the study of the general Pieri-type formulas. 

We commence our study of the general Pieri-type coefficients in Section \ref{structure} by determining the necessary conditions for the coefficients $A_{\eta \lambda}^{(r)}$ in (\ref{nonzero}) to be non-zero. Explicit formulas for the branching coefficients are derived in Sections \ref{pieri1} and \ref{pierir}. We begin with an alternative derivation of the $r=1$ case in Sections \ref{pieri1} and proceed with the general formulas in Section \ref{pierir}. The main results are stated in Theorem \ref{generalpiericoefficients} and Proposition \ref{binomialcoeff}. As a corollary of Proposition \ref{binomialcoeff} we are able provide an explicit formula for the generalised binomial coefficient $\tbinom{\eta }{\nu }$. We conclude the paper by considering possible simplifications of the coefficient formulas and a discussion of yet another representation of the coefficients in the case of $r=1$ that could lead to more succinct expressions for the coefficients in general $r$ case.

\section{The nonsymmetric Macdonald polynomials}\label{nsm}

The nonsymmetric Macdonald polynomials $E_{\eta }:=E_{\eta }\left(
z;q,t\right) $, labelled by compositions $\eta :=\left( \eta _{1},...,\eta _{n}\right) $ are most commonly defined to be the simultaneous eigenfunctions of commuting Cherednik type operators \cite{affine}. Here we take an alternative approach by introducing two elementary operators that allow the polynomials to be generated, and consequently defined, recursively.

We first introduce the Demazure-Lustig operator, $T_i$, a switching type operator that relates the polynomials $E_{\eta}$ and $E_{s_i\eta}$. The operator $s_i$ is a transposition operator which acts on compositions 
\begin{equation*}
 s_{i} \eta
:=\left( \eta_{1},..., \eta_{i+1}, \eta_{i},..., \eta_{n}\right),  \label{switchingcomp}
\end{equation*}
and functions 
\begin{equation*}
\left( s_{i}f\right) \left( z_{1},...,z_{i},z_{i+1},...,z_{n}\right)
:=f\left( z_{1},...,z_{i+1},z_{i},...,z_{n}\right) .  \label{switching}
\end{equation*}
The operator $T_i$ is defined by
\begin{equation}
T_{i}:=t+\frac{tz_{i}-z_{i+1}}{z_{i}-z_{i+1}}\left( s_{i}-1\right) ,
\label{Ti}
\end{equation}
and is realisation of the type-A Hecke algebra 
\begin{align}
\left( T_{i}+1\right) \left( T_{i}-t\right) =&\hspace{0.1cm}0  \notag \\
T_{i}T_{i+1}T_{i} =&\hspace{0.1cm}T_{i+1}T_{i}T_{i+1},\hspace{1cm}i=2,...,n-2
\label{hecke algebra} \\
T_{i}T_{j} =&\hspace{0.1cm}T_{j}T_{i},\hspace{2.1cm}\left\vert i-j\right\vert >1.\text{ } 
\notag
\end{align}
The operator $T_i$ acts on $E_\eta$ according to \cite{mimachi}
\begin{equation}
T_{i}E_{\eta }(z)=\left\{ 
\begin{tabular}{ll}
$ \frac{t-1}{1-\delta _{i,\eta }^{-1}( q,t) }
E_{\eta }(z)+tE_{s_{i}\eta }(z)$ & $\eta _{i}<\eta _{i+1}$ \\ 
$tE_{\eta }(z)$ & $\eta _{i}=\eta _{i+1}$ \\ 
$ \frac{t-1}{1-\delta _{i,\eta }^{-1}( q,t) }
E_{\eta }(z)+\frac{( 1-t\delta _{i,\eta }( q,t) ) (
1-t^{-1}\delta _{i,\eta }( q,t) ) }{( 1-\delta
_{i,\eta }( q,t) ) ^{2}}E_{s_{i}\eta }(z)$ & $\eta _{i}>\eta
_{i+1}$.
\end{tabular}
\right.  \label{TiEn}
\end{equation}
In (\ref{TiEn}) $\delta _{i,\eta }( q,t) :=\overline{\eta }_i/\overline{
\eta} _{i+1}$, with
\begin{equation}
\overline{\eta }_{i}:=q^{\eta _{i}}t^{-l_{\eta }^{\prime }\left( i\right) },
\label{eigenvalue}
\end{equation}
where
\begin{equation} \label{legcolength}
l_{\eta }^{\prime }(i):=\#\left\{ j<i;\eta _{j}\geq \eta _{i}\right\}
+\#\left\{ j>i;\eta _{j}>\eta _{i}\right\} . 
\end{equation}

The second operator $\Phi _{q}$, a raising operator, transforms the
polynomial $E_{\eta }$ to $E_{\Phi \eta }.$ Here 
$$
\Phi \eta:=(\eta_2,\ldots,\eta_n,\eta_1+1)
$$
and the operator $\Phi _{q}$ is
defined by \cite{peter3}
\begin{equation*}
\Phi
_{q}:=z_{n}T_{n-1}^{-1}\ldots T_{1}^{-1}=t^{i-n}T_{n-1}\ldots T_{i}z_{i}T_{i-1}^{-1}\ldots T_{i}^{-1}.
\label{raising polynomials}
\end{equation*}
The operator $T_i^{-1}$ is related to $T_i$ by the quadratic relation in (\ref{hecke algebra}) and given explicitly by
$$
T_i^{-1}:=t^{-1}-1+t^{-1}T_i.
$$
The raising operator acts on $E_\eta$ according to \cite{peter3}
\begin{equation}
\Phi _{q}E_{\eta }(z)=t^{-\#\{i>1;\eta_i\leq \eta_1\} }E_{\Phi \eta } (z). \label{3.3marshall}
\end{equation}

By defining the nonsymmetric Macdonald polynomial $E_{(0,\ldots,0)}(z):=1$ and observing that every composition $\eta$ can be recursively generated from $(0,\ldots,0)$ using only $s_i$ and $\Phi$ we can be assured that each $E_\eta$ can be recursively generated using only $T_i$ and $\Phi_q$.

A further alternative characterization of the nonsymmetric Macdonald polynomials is as multivariate orthogonal polynomials. This definition requires an inner product and two partial orderings. Introduce the inner product 
\begin{equation}
\left\langle f,g\right\rangle _{q,t}:=\text{CT}[ f( z;q,t)
g( z^{-1},q^{-1},t^{-1}) W( z) ]
\label{inner product}
\end{equation}
where CT denotes the constant term in the formal Laurent expansion and 
\begin{equation*}
W( z) :=W( z;q,t) :=\prod_{1\leq
i<j\leq n}\frac{( \frac{z_{i}}{z_{j}};q) _{\infty }(q \frac{
z_{j}}{z_{i}};q) _{\infty }}{( t\frac{z_{i}}{z_{j}};q),
_{\infty }(qt\frac{z_{j}}{z_{i}};q) _{\infty }},
\end{equation*}
with the Pochhammer symbol defined by $\left( a;q\right) _{\infty }:=\prod_{j=0}^{\infty }\left( 1-aq^{j}\right)$.
Let $| \eta| :=\Sigma _{i=1}^{n}\eta _{i}$ denote the modulus of $\eta $. The dominance ordering $<$, a partial ordering on compositions of the same modulus, is defined by 
\begin{equation*} 
\eta < \mu \text{ iff }\eta \not = \mu \text{ and }\Sigma
_{i=1}^{p}\left( \eta _{i}-\mu _{i}\right) \geq 0 \text{ for all }1\leq p\leq n. 
\end{equation*}
A further partial ordering on compositions of the same modulus is $\prec $ is defined by
\begin{equation*}
\mu \prec \eta \text{ iff }\mu ^{+}<\eta ^{+}\text{ or in the case }\mu
^{+}=\eta ^{+},\text{ }\mu <\eta
\end{equation*}
where $\eta ^{+}$ is the unique partition obtained by permuting the
components of $\eta $. For a given value of $|\eta|$, the $E_\eta$ can be constructed via a Gram-Schmidt procedure with respect to (\ref{inner product}) from the requirements that \cite{affine}
\begin{equation}
\langle E_\eta, E_\nu \rangle_{q,t}=\delta_{\eta\nu}\,\mathcal{N}_\eta, \label{orthogonality relation}
\end{equation} 
and that with $z^\eta:=z_1^{\eta_1}z_2^{\eta_2}\ldots z_n^{\eta_n}$
\begin{equation}
E_{\eta }( z;q,t) =z^{\eta }+\sum_{\mu \prec \eta
}b_{\eta \mu }z^{\mu }, \hspace{1cm} b_{\eta \lambda }\in \mathbb{Q}( q,t).\label{forma}
\end{equation}

We will have future use for the explicit value of $\mathcal{N}_{\eta}$. To express this we require a number of quantities dependent on $\eta$. For each node $s=(i,j)\in \text{diag}(\eta)$ we define the arm length, $a_{\eta}(s):=\eta_{i}-j$, arm colength, $a'_{\eta}(s):=j-1$, leg length, $l_{\eta}(s):=\#\{k<i:j\leq\eta_{k}+1\leq\eta_{i}\}+\#\{k<i:j\leq\eta_{k}\leq\eta_{i}\}$ and leg colength $l'_{\eta}(s)$, given by (\ref{legcolength}). From these we define \cite{sahi}
\begin{align*}
d_{\eta }&:=d_{\eta }\left( q,t\right)=\prod_{s\in \text{diag}(\eta )}\left(
1-q^{a_{\eta }\left( s\right) +1}t^{l_{\eta }\left( s\right) +1}\right) 
\text{, \ \ } \\
d_{\eta }^{\prime }&:=d_{\eta }^{\prime }\left( q,t\right) =\prod_{s\in \text{diag}%
(\eta )}\left( 1-q^{a_{\eta }\left( s\right) +1}t^{l_{\eta }\left( s\right)
}\right) , \\
e_{\eta }&:=e_{\eta }\left( q,t\right) =\prod_{s\in \text{diag}(\eta )}\left(
1-q^{a_{\eta }^{\prime }\left( s\right) +1}t^{n-l_{\eta }^{\prime }\left(
s\right) }\right) \text{ }, \\
e_{\eta }^{\prime }&:=e_{\eta }^{\prime }\left( q,t\right) =\prod_{s\in \text{diag}(\eta
)}\left( 1-q^{a_{\eta }^{\prime }\left( s\right) +1}t^{n-1-l_{\eta }^{\prime
}\left( s\right) }\right).
\end{align*}
In this notation the explicit formula for $\mathcal{N}_\eta$ is given by \cite[see, e.g.,][]{cherednik}
\begin{equation}
\mathcal{N}_\eta=\frac{d'_{\eta } e_{\eta
}}{d_{\eta }e_{\eta }^{\prime }}{\left\langle
1,1\right\rangle _{q,t}}. \label{Nn}
\end{equation}

We now introduce a generalisation of the nonsymmetric Macdonald polynomials, the interpolation Macdonald polynomials.

\section{Interpolation Macdonald polynomials\label{insm}}
Although the interpolation Macdonald polynomials are not homogeneous, like the Macdonald polynomials they exhibit a certain triangular structure, specified by
 \begin{equation}
E_{\eta }^{\ast }( z) =z^{\eta }+\sum_{\lambda \prec \eta}b'_{\eta \lambda }z^{\lambda }, \hspace{1cm} b'_{\eta \lambda }\in \mathbb{Q}( q,t),\label{intform}
\end{equation}
where here $\lambda \prec \eta$ is extended to compositions $\lambda$ of modulus less than $|\eta|$ as well. Moreover, also in analogy with the nonsymmetric Macdonald polynomials, they permit a number of distinct characterisations which can be taken as their definition. Here we present such definitions, important to our subsequent workings.

Let
\begin{equation}
\Xi _{i}:=z_{i}^{-1}+z_{i}^{-1}H_{i}\ldots H_{n-1}\Phi H_{1}\ldots H_{i-1}.
\label{XiI}
\end{equation}
In (\ref{XiI}) 
\begin{align}
H_{i}:=&\frac{( t-1) z_{i}}{z_{i}-z_{i+1}}+\frac{z_{i}-tz_{i+1}}{
z_{i}-z_{i+1}}s_{i}  \label{hecke} \\
=&t+\frac{z_i-tz_{i+1}}{z_i-z_{i+1}}(s_{i}-1)
\end{align}
(cf. (\ref{Ti})) is a Hecke operator and 
\begin{equation}
\Phi :=\left( z_{n}-t^{-n+1}\right) \Delta \text{ },  \label{Phi}
\end{equation}%
where
\begin{equation*}
\Delta f\left( z_{1},...,z_{n}\right) =f\left( \frac{z_{n}}{q}%
,z_{1},...,z_{n-1}\right)
\end{equation*}%
is a raising operator.
We note that the Hecke operator, like the Demazure-Lustig operator is a realisation of the type-$A$ Hecke algebra.
\begin{definition} \rm{\cite{knop}}
With $\Xi _{i}$ as given in (\ref{XiI}) the interpolation Macdonald polynomials can be
defined, up to normalisation, as the unique simultaneous eigenfunctions of each $\Xi_i$ of the form (\ref{intform}) according to 
\begin{equation}
\Xi _{i}E_{\eta }^{\ast }\left( z;q,t\right) =\overline{\eta }%
_{i}^{-1}E_{\eta }^{\ast }\left( z;q,t\right) ,  \label{eigenfunction}
\end{equation}%
where $\overline{\eta }_{i}$ is given by (\ref{eigenvalue}). 
\end{definition}

In \cite{knop} Knop showed that the top homogeneous component of $E^*_\eta(z;q,t)$ is $E_\eta(z;q^{-1},t^{-1})$ using a relationship between the corresponding eigenoperators. This relationship is fundamental to our study, allowing us to use interpolation polynomial theory to derive explicit formulas for the coefficients $A_{\eta \lambda}^{(r)}$ in (\ref{nonzero}).

Another defining characteristic of the $E_\eta^*$ relates to the the recursive generation (cf (\ref{TiEn}),(\ref{3.3marshall})).

\begin{definition} \rm{\cite{knop}} With $H_i$ given by (\ref{hecke}) and $\Phi$ given by (\ref{Phi}) we can recursively generate $E_\eta^*$ recursively from $E_{(0,\ldots,0)}^*=1$ using
\begin{align}
E_{s_{i}\eta }^{\ast }( z)= &\left\{ 
\begin{tabular}{ll}
$H_{i}E_{\eta }^{\ast }( z)-\frac{t-1}{1-\delta _{i\eta }^{-1}( q,t) }E_{\eta }^{\ast
}( z)$ & $\eta _{i}<\eta
_{i+1}$ \\ 
$E_{\eta }^{\ast }( z) $ & $\eta _{i}=\eta _{i+1}$ \\ 
$\frac{( 1-\delta
_{i,\eta }( q,t)) ^{2} }{( 1-t\delta _{i,\eta
}( q,t)) (t- \delta _{i,\eta }( q,t))}\Big(H_{i}E_{\eta }^{\ast }( z)-\frac{t-1}{1-\delta _{i\eta }^{-1}( q,t) }E_{\eta }^{\ast
}( z)\Big)$ & $\eta _{i}>\eta _{i+1}$
\end{tabular}
\right.  \label{hecke on interpolation} \\ \notag \\
\Phi E_{\eta }^{\ast }( z)=&q^{-\eta _{1}}E_{\Phi \eta }^{\ast
}( z). \label{raising on interpolation}
\end{align}
\end{definition}

The final definition characterises the polynomials according to their vanishing properties.

\begin{definition} \rm{\cite{knop}}
With $\overline{\eta}_i$ as given in (\ref{eigenvalue}) we define $\overline{\eta}:=(\overline{\eta}_1,\ldots,\overline{\eta}_n)$. The interpolation Macdonald polynomials can be
defined, up to normalisation, as the unique polynomial of degree $\leq
\left\vert \eta \right\vert $ satisfying
\begin{equation*} \label{thevanishingconditions}
E_{\eta }^{\ast }( \overline{\mu }) =0,\text{ }\left\vert \mu
\right\vert \leq \left\vert \eta \right\vert ,\text{ }\mu \not=\eta
\end{equation*}
and $E_{\eta }^{\ast }( \overline{\eta }) \not=0.$
\end{definition}

It is well known \cite{knop} that the interpolation Macdonald polynomials $E_{\eta }^{\ast }( z) $
vanish on a larger domain than $\lambda \in \Lambda $ with $\left\vert
\lambda \right\vert \leq \left\vert \eta \right\vert ,$ $\lambda \neq \eta .$ These extra vanishing properties, that play an imperative role in the derivation of our coefficients, and will now be considered in some detail.

\subsection{Successors and the Extra Vanishing Theorem\label{the extra
vanishing theorem}}

To state the larger vanishing domain of the interpolation polynomials a further partial ordering is required. We
write $\eta \preceq' \lambda $, and say $\lambda$ is a successor of $\eta$, if there exists a permutation $\sigma $ such
that 
$$\eta _{i}<\lambda _{\sigma ( i) }\hspace{0.1cm} \text{ if }\hspace{0.1cm} i<\sigma (
i)\hspace{0.2cm}  \text{ and }\hspace{0.2cm} \eta _{i}\leq \lambda _{\sigma ( i) }\hspace{0.1cm} \text{ if } \hspace{0.1cm}i\geq \sigma (
i) .$$

We call $\sigma $ a defining permutation for $\eta \preceq' \lambda $ and write $\eta \preceq' \lambda;\sigma.$ It is important to note that defining permutations are not unique. For example $(1,2,1) \prec' (1,2,2)$ has defining permutations $(1,2,3)$ and $(3,2,1)$. However, there is only one defining permutation such that for all $i$ such that $\sigma(i)=i$ we have $\eta_i=\lambda_i$;  this defining permutation is to be denoted $\widehat{\sigma}$. 
 
In \cite{knop} Knop showed that if $\eta \not\preceq' \lambda $ then $E_{\eta }^{\ast
}( \overline{\lambda }) =0$ using the eigenoperator and the defining vanishing properties of the interpolation polynomials. In this section we prove this extra vanishing property using an alternative method that employs all three defining properties of the interpolation polynomials. The alternative method allows us to extend Knop's result as it can be used to show the converse is also true, that is if $E_{\eta }^{\ast
}( \overline{\lambda }) =0$ then $\eta \not\preceq' \lambda $. 
 
We begin as in \cite{knop} introducing the minimal elements lying above $\eta ,$ the $
\lambda $ such that $\eta \preceq' \lambda $ and $\left\vert \lambda
\right\vert =\left\vert \eta \right\vert +1$. Such compositions are denoted by $c_{I}( \eta ) $ and defined by
\begin{equation}
( c_{I}( \eta ) ) _{j}=\left\{ 
\begin{tabular}{ll}
$\eta _{t_{k+1}}$ & $;j=t_{k},$ if $k=1,\ldots,s-1$ \\ 
$\eta _{t_{1}}+1$ & $;j=t_{s}$ \\ 
$\eta _{j}$ & $;j\notin I,$
\end{tabular}
\ \right.   \label{cIn}
\end{equation}
where $I=\left\{ t_{1},\ldots,t_{s}\right\} \subseteq \left\{ 1,\ldots,n\right\} $
with $1\leq t_{1}<\ldots<t_{s}\leq n.$ More explicitly 
\begin{eqnarray*}
c_{I}( \eta ) &=&(\eta _{1}\ldots\eta _{t_{1-1}},\mathbf{\eta }
_{t_{2}},\eta _{t_{1}+1}\ldots\eta _{t_{2}-1},\mathbf{\eta }_{t_{3}},\eta
_{t_{2}+1}\ldots \\
&&\hspace{1cm}\eta _{t_{s-1}-1},\mathbf{\eta }_{t_{s}},\eta _{t_{s-1}+1}\ldots\eta
_{t_{s}-1},\mathbf{\eta }_{t_{1}}\mathbf{+1},\eta _{t_{s}+1}\ldots\eta _{n}),
\end{eqnarray*}
(the $1$ added to $\eta_{t_1}$ has been set in bold to highlight its location).
We now show how each successor can be recursively generated from $\eta$ using the switching and raising operators.
\begin{proposition}
\label{successors by permutations}With $c_{I}( \eta ) $ defined
as above we have
\begin{equation*}
c_{I}( \eta ) =\sigma _{t_{1}+1}\ldots\sigma _{n}\Phi
s_{1}\ldots s_{t_{1}-1}\eta ,
\end{equation*}
where 
\begin{equation*}
\sigma _{i}=\left\{ 
\begin{tabular}{ll}
$1$ & $;i\in I$ \\ 
$s_{i-1}$ & $;i\not\in I.$
\end{tabular}
\right.
\end{equation*}
\end{proposition}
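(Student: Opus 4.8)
The plan is to verify the claimed identity $c_I(\eta)=\sigma_{t_1+1}\cdots\sigma_n\,\Phi\,s_1\cdots s_{t_1-1}\,\eta$ by simply tracking the action of the right-hand word on the composition $\eta$, reading the operators from right to left, and comparing the result with the explicit componentwise description of $c_I(\eta)$ recorded in (\ref{cIn}). First I would apply $s_1\cdots s_{t_1-1}$ to $\eta$; this sequence of adjacent transpositions is a cycle that carries the entry originally in position $t_1$ to position $1$ while shifting $\eta_1,\dots,\eta_{t_1-1}$ one place to the right, producing $(\eta_{t_1},\eta_1,\dots,\eta_{t_1-1},\eta_{t_1+1},\dots,\eta_n)$. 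Then I would apply $\Phi$, which by definition sends $(\mu_1,\dots,\mu_n)\mapsto(\mu_2,\dots,\mu_n,\mu_1+1)$; this removes the leading $\eta_{t_1}$, restores $\eta_1,\dots,\eta_{t_1-1}$ to the front in their original positions, and appends $\eta_{t_1}+1$ at the end. So after $\Phi s_1\cdots s_{t_1-1}$ we have the composition $(\eta_1,\dots,\eta_{t_1-1},\eta_{t_1+1},\dots,\eta_n,\eta_{t_1}+1)$, in which the ``$+1$'' box sits in the last slot.

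Next I would apply $\sigma_{t_1+1}\cdots\sigma_n$, again from right to left, i.e. starting with $\sigma_n$ and working down to $\sigma_{t_1+1}$. Each $\sigma_i$ is either the identity (when $i\in I$) or the transposition $s_{i-1}$ (when $i\notin I$). The effect of this word is to take the last entry $\eta_{t_1}+1$ and ``bubble'' it leftward: whenever the index $i$ corresponding to the current step lies outside $I$ the transposition $s_{i-1}$ swaps it past one neighbour, and whenever $i\in I$ it is left in place. I would argue by a descending induction on $i$ from $n$ to $t_1+1$ that after the steps $\sigma_n,\dots,\sigma_{i+1}$ the partially moved entries have been arranged so that the values $\eta_{t_2},\eta_{t_3},\dots$ (the entries indexed by $I$) occupy precisely positions $t_1,t_2,\dots,t_{s-1}$ while $\eta_{t_1}+1$ occupies position $t_s$, with all other entries $\eta_j$, $j\notin I$, sitting in their original positions $j$. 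Matching this against (\ref{cIn}) — where $(c_I(\eta))_{t_k}=\eta_{t_{k+1}}$ for $k<s$, $(c_I(\eta))_{t_s}=\eta_{t_1}+1$, and $(c_I(\eta))_j=\eta_j$ for $j\notin I$ — gives the result.

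The main obstacle is purely bookkeeping: one has to set up the inductive hypothesis carefully enough that the two qualitatively different behaviours of $\sigma_i$ (skip versus swap) are both handled by the same invariant, and one has to be careful that when $i\in I$ the entry being moved really is already resting in the correct final position $t_k$ rather than needing to move further. A clean way to organise this is to observe that the composite $\sigma_{t_1+1}\cdots\sigma_n$ equals the permutation (written in one-line notation acting on positions $t_1,\dots,n$) that is the product, in the appropriate order, of the single ``long cycles'' on each maximal run of consecutive indices \emph{not} in $I$ between successive elements of $I$; this makes the final placement of each block transparent. I would also remark, as a sanity check, that both sides have modulus $|\eta|+1$ and that $\eta\preceq'c_I(\eta)$, consistent with the fact that $\Phi$ and the $s_i$ generate all successors, as noted just before the proposition.
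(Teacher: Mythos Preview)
Your proposal is correct and follows essentially the same approach as the paper's own proof: both simply track the action of the word $\sigma_{t_1+1}\cdots\sigma_n\Phi s_1\cdots s_{t_1-1}$ on $\eta$ step by step and compare with the explicit description~(\ref{cIn}). The paper's proof is just a two-sentence verbal summary --- the right block moves $\eta_{t_1}$ to the front so that $\Phi$ can increment it, and then each $s_{j-1}$ on the left (for $j\notin I$) returns $\eta_j$ to its original slot, which automatically deposits the $I$-indexed entries where they belong --- whereas you spell out the same computation via a descending induction on $i$ with a ``moving entry'' invariant. Your more explicit bookkeeping is a perfectly reasonable way to make the paper's terse argument rigorous, and your remark that the word $\sigma_{t_1+1}\cdots\sigma_n$ factors into cycles on the maximal runs between elements of $I$ is a nice alternative way to see the final placement; but there is no genuine difference in strategy.
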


\begin{proof}
The operators to the right of $\Phi $ move $\eta _{t_{1}}$ to the first
position, thus enabling $\Phi$ to increase its value by $1$. Each $
s_{j-1}$ on the left hand side moves each $\eta _{j},$ for $j\not\in I,$ back
to its original position, automatically placing the $\eta _{i}$ with $i\in
I$ into the correct position.
\hfill $\square$ \end{proof}\begin{example}
Take $\eta =( 1,3,5,7,9,11,13,15) $ and $I=\left\{
3,4,5,7\right\} $
\begin{eqnarray*}
s_{2}\eta &=&( 1,\mathbf{5,3},7,9,11,13,15) \\
s_{1}s_{2}\eta &=&( \mathbf{5,1},3,7,9,11,13,15) \\
\Phi s_{1}s_{2}\eta &=&( 1,3,7,9,11,13,15,\mathbf{6}) \\
s_{7}\Phi s_{1}s_{2}\eta &=&( 1,3,7,9,11,13,\mathbf{6,15}) \\
s_{5}s_{7}\Phi s_{1}s_{2}\eta &=&( 1,3,7,9,\mathbf{13,11},6,15). \\
&&
\end{eqnarray*}
\end{example}
By the definition of $c_I(\eta)$ it is clear that $\eta \prec' c_{I}( \eta ) .$ The following lemma considers the other direction. 

\begin{lemma} \label{successorsprec} {\rm \cite{knop}}
\label{successor} If $|\lambda| = |\eta|+1$ and $\eta \prec' \lambda$ then there exists a set $I=\{t_1,\ldots,t_s\}\subseteq\{1,\ldots,n\}$ with $1\leq t_1<\ldots <t_s \leq n$ such that $c_I(\eta)=\lambda.$
\end{lemma}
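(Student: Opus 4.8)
The plan is to recover the index set $I$ from the cycle structure of a defining permutation for $\eta \prec' \lambda$. Fix such a permutation $\sigma$, so that $\lambda_{\sigma(i)} \ge \eta_i$ for every $i$, strictly whenever $i < \sigma(i)$. Since $\sigma$ merely permutes the parts of $\lambda$, one has $\sum_i \lambda_{\sigma(i)} = |\lambda| = |\eta| + 1$, and each $\lambda_{\sigma(i)} - \eta_i$ is a non-negative integer; hence exactly one index $i_0$ satisfies $\lambda_{\sigma(i_0)} = \eta_{i_0} + 1$, while $\lambda_{\sigma(i)} = \eta_i$ for all $i \ne i_0$. For those $i$ the equality is incompatible with $i < \sigma(i)$, so $\sigma(i) \le i$ for every $i \ne i_0$.

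Next I would analyse the disjoint-cycle decomposition of $\sigma$. A cycle of length at least $2$ has no fixed point, so along such a cycle avoiding $i_0$ one would get a strictly decreasing chain $x > \sigma(x) > \sigma^2(x) > \ldots$ eventually returning to $x$, which is impossible; thus every cycle of $\sigma$ other than the one through $i_0$ is a single fixed point, and $\lambda_j = \eta_j$ there. Let $C = \{c_1 < \ldots < c_m\}$ be the cycle through $i_0$. If $m = 1$ then $\sigma(i_0) = i_0$, $\lambda$ coincides with $\eta$ except that $\lambda_{i_0} = \eta_{i_0} + 1$, and $I = \{i_0\}$ does the job. If $m \ge 2$, the same observation forces $\sigma(i_0) > i_0$, and then the orbit $i_0 \mapsto \sigma(i_0) \mapsto \sigma^2(i_0) \mapsto \ldots$ rises exactly once and thereafter strictly decreases all the way back to $i_0$; this pins down $i_0 = c_1$, $\sigma(c_1) = c_m$, and $\sigma(c_k) = c_{k-1}$ for $2 \le k \le m$.

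Finally I would put $I = C$, write $t_k = c_k$ (so $s = m$), and verify $c_I(\eta) = \lambda$ by matching (\ref{cIn}) against these relations: for $1 \le k \le m-1$, $\lambda_{t_k} = \lambda_{\sigma(c_{k+1})} = \eta_{c_{k+1}} = \eta_{t_{k+1}}$; for $k = m$, $\lambda_{t_s} = \lambda_{\sigma(c_1)} = \eta_{c_1} + 1 = \eta_{t_1} + 1$; and $\lambda_j = \eta_j$ for $j \notin I$. These are precisely the three cases in (\ref{cIn}), so $c_I(\eta) = \lambda$, as required.

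The step I expect to be the real obstacle is the cycle analysis: one has to argue cleanly that, since every displacement of $\sigma$ except one is to the left, the unique non-trivial cycle of $\sigma$ must have $i_0$ as its least element and $\sigma(i_0)$ as its greatest, with $\sigma$ strictly decreasing in between — and one must remember to treat separately the degenerate case in which that cycle is just $\{i_0\}$, which is the case $s = 1$, $I = \{i_0\}$.
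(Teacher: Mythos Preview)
Your argument is correct and follows essentially the same route as the paper: both identify the unique index where the $+1$ occurs, use the $\prec'$ condition to force $\sigma(i)\le i$ elsewhere, read off the single nontrivial cycle, and take $I$ to be its support. The only cosmetic difference is that the paper works with the distinguished defining permutation $\widehat{\sigma}$ (so that fixed points automatically satisfy $\eta_i=\lambda_i$) and then asserts the cycle shape (\ref{definingpermutation}) directly, whereas you start from an arbitrary defining permutation and derive that same cycle structure from the decreasing-chain argument; your treatment of the degenerate case $m=1$ is in fact more explicit than the paper's.
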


\begin{proof} Since $|\lambda| = |\eta|+1$ and $\eta \prec' \lambda$ the defining permutation $\widehat{\sigma}$ must satisfy $\lambda_{\widehat{\sigma}{(i)}}=\eta_i$ for all but one $i$, say $i=k$, in which case $\lambda_{\widehat{\sigma}{(k)}}=\eta_k+1$. By the definition of $\prec'$ we must have $i\geq \widehat{\sigma}(i)$ for $i\not=k$. It follows that with with $I=\{i;\widehat{\sigma}(i)\not=i\}=\{t_1,\ldots,t_s\}$ we must have $\widehat{\sigma}$ specified by 
\begin{equation}
\widehat{\sigma}(i)=\left\{ 
\begin{tabular}{ll}
$i$ & $;i \not\in I$ \\ 
$t_{j-1}$ & $;i=t_j\in I, \hspace{1cm}j=2,\ldots,s,$ \\ 
$t_s$ & $;i=t_1 \in I.$
\end{tabular}
\ \right.  \label{definingpermutation}
\end{equation}
Combining (\ref{definingpermutation}) with (\ref{cIn}) shows $\lambda=c_I(\eta)$.
\hfill $\square$ \end{proof}

This lemma, in addition to the knowledge of the action of permutations on compositions 
\begin{equation}  \label{tupleoncomposition}
\sigma\eta=(\eta_{\sigma^{-1}(1)},\ldots,\eta_{\sigma^{-1}(n)}),
\end{equation} 
can be used to show that one does not need to check all permutations to establish that $\eta \preceq' \lambda $. Before stating the result - due to Knop \cite{knop} but to be derived differently below - we define $\omega _{\eta }$ to be the shortest
permutation such that 
\begin{equation} \label{shortestperm}
\omega _{\eta }^{-1}( \eta ) =\eta ^{+}.
\end{equation}

\begin{lemma} \label{definingpermutation2}For $\lambda$ such that $|\lambda| = |\eta|+1$ the defining permutation $\widehat{\sigma}$ of $\eta \preceq' \lambda $ is $\widehat{\sigma}=\omega_{\lambda} \omega_\eta^{-1}$.
\end{lemma}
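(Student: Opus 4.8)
The plan is to verify that $\tau:=\omega_{\lambda}\omega_{\eta}^{-1}$ is a defining permutation for $\eta\preceq'\lambda$ having the canonical property — every fixed point $i$ of $\tau$ satisfies $\eta_i=\lambda_i$ — and then to conclude $\tau=\widehat{\sigma}$ from the uniqueness of the canonical defining permutation. Throughout I use that $\omega_{\eta}$, being the shortest permutation with $\omega_{\eta}^{-1}(\eta)=\eta^{+}$, is the stable decreasing rearrangement of $\eta$; concretely $\omega_{\eta}^{-1}(i)=l'_{\eta}(i)+1$ with $l'_{\eta}$ as in (\ref{legcolength}), so that $\eta_i=\eta^{+}_{\omega_{\eta}^{-1}(i)}$ and, by (\ref{tupleoncomposition}), $\lambda_{\tau(i)}=\lambda_{\omega_{\lambda}(\omega_{\eta}^{-1}(i))}=\lambda^{+}_{\omega_{\eta}^{-1}(i)}$, and similarly for $\lambda$.

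I would begin with two preliminaries. Since $\eta\preceq'\lambda$, comparing the $j$ largest parts on either side gives $\eta^{+}_j\le\lambda^{+}_j$ for every $j$; as $|\lambda|=|\eta|+1$ this forces $\lambda^{+}=\eta^{+}+e_{j_0}$ for a single index $j_0$, where $e_{j_0}$ is the $j_0$-th coordinate vector. Also, by Lemma~\ref{successorsprec}, $\lambda=c_I(\eta)$ with $I=\{t_1<\dots<t_s\}$, and since $\widehat{\sigma}$ is the \emph{canonical} defining permutation one has $t_1\in I$ (the index where the increment occurs) and $s\ge 2$ (as in the proof of Lemma~\ref{successorsprec}); combining (\ref{definingpermutation}) with (\ref{cIn}) gives $\widehat{\sigma}^{-1}\lambda=\eta+e_{t_1}$, so $\lambda^{+}=(\eta+e_{t_1})^{+}$ and hence $j_0$ is the first position of the value $\eta_{t_1}$ in $\eta^{+}$.

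Now put $j:=\omega_{\eta}^{-1}(i)$. The defining inequalities for $\tau$ read $\eta^{+}_j\le\lambda^{+}_j$, which always holds and settles the case $i\ge\tau(i)$, while the strict version $\eta^{+}_j<\lambda^{+}_j$ holds precisely when $j=j_0$. Thus the lemma reduces to the two index statements
\begin{equation*}
\omega_{\lambda}(j)\le\omega_{\eta}(j)\quad\text{for }j\ne j_0,\qquad\text{and}\qquad\omega_{\lambda}(j_0)>\omega_{\eta}(j_0):
\end{equation*}
the first says $\tau(\omega_{\eta}(j))=\omega_{\lambda}(j)\le\omega_{\eta}(j)$ for $j\ne j_0$, so the strict inequality is only ever required at $i=\omega_{\eta}(j_0)$, where it holds; and by the second, any fixed point $i$ of $\tau$ has $\omega_{\eta}^{-1}(i)\ne j_0$, hence $\eta_i=\eta^{+}_j=\lambda^{+}_j=\lambda_i$, the canonical property. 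These two statements I would prove by comparing the stable sorts of $\eta$ and of $\lambda=c_I(\eta)$ directly: $c_I$ moves the parts at the positions of $I$ cyclically one step to the left and raises the wrapped-around part $\eta_{t_1}$ by one, and one tracks, over the three regions $i\notin I$, $i$ strictly between consecutive elements of $I$, and $i\in I$, how the leg-colengths $l'_{\eta}$ and $l'_{\lambda}$ — equivalently the ranks $\omega_{\eta}^{-1}$, $\omega_{\lambda}^{-1}$ — differ. A subsidiary point entering the identification $\omega_{\eta}(j_0)=t_1$ is that in the canonical presentation $\lambda=c_I(\eta)$ no part of $\eta$ equal to $\eta_{t_1}$ lies strictly to the left of position $t_1$: otherwise one could reroute that earlier equal part together with $\eta_{t_1}$ through a different defining permutation which still has the canonical property, contradicting the uniqueness of $\widehat{\sigma}$. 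With this, $\omega_{\eta}(j_0)=t_1$, while $\omega_{\lambda}(j_0)$ is the rightmost position of a value-$(\eta_{t_1}+1)$ part of $\lambda$, which is $\ge t_s>t_1$.

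The main obstacle is exactly this last bookkeeping: because $c_I$ simultaneously permutes several parts and alters one value, the sorting data of $\lambda$ is not obtained from that of $\eta$ by a single transposition, and the region-by-region comparison of $l'_{\eta}$ with $l'_{\lambda}$ must be carried out carefully. An equivalent route meeting the same obstacle is to check directly that $\widehat{\sigma}\,\omega_{\eta}$ satisfies the two characterizing properties of $\omega_{\lambda}$ (that $(\widehat{\sigma}\,\omega_{\eta})^{-1}$ maps $\lambda$ to $\lambda^{+}$, and that $\widehat{\sigma}\,\omega_{\eta}$ is increasing on each block of equal parts of $\lambda^{+}$); I would expect the present route, via $\preceq'$ and the canonical property, to be marginally cleaner. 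Everything else is formal manipulation of (\ref{tupleoncomposition}) and of the definitions of $\preceq'$, $\omega_{\bullet}$, and $\widehat{\sigma}$.
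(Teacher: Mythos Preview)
Your strategy is sound and reaches the same conclusion as the paper's, but the two arguments are organized differently. The paper, having written $\lambda = c_I(\eta)$ and noted (as you do) that $(\eta^+)_j \leq (c_I(\eta)^+)_j$ for all $j$, proceeds by verifying \emph{directly} that $\omega_{c_I(\eta)}\omega_\eta^{-1}$ agrees pointwise with the explicit cycle (\ref{definingpermutation}) already known to describe $\widehat\sigma$: from the characterizing property (\ref{shortestperm}) of $\omega_\bullet$ it deduces $\omega_{c_I(\eta)}\omega_\eta^{-1}(t_j) = t_{j-1}$ for $j = 2,\dots,s$, $\omega_{c_I(\eta)}\omega_\eta^{-1}(t_1) = t_s$, and $\omega_{c_I(\eta)}\omega_\eta^{-1}(i) = i$ whenever $\eta_i = (c_I(\eta))_i$, and concludes. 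You instead check that $\tau := \omega_\lambda\omega_\eta^{-1}$ satisfies the two abstract properties characterizing $\widehat\sigma$ (it is a defining permutation for $\eta\preceq'\lambda$, and every fixed point sits where $\eta$ and $\lambda$ agree) and then invoke uniqueness. Both routes rest on the same underlying comparison of the stable sorts of $\eta$ and $c_I(\eta)$, which the paper leaves as a terse deduction from (\ref{shortestperm}) and which you rightly flag as the main bookkeeping obstacle; indeed your ``equivalent route'' --- verifying that $\widehat\sigma\,\omega_\eta$ has the defining properties of $\omega_\lambda$ --- is essentially the paper's argument read in the other direction. What your packaging buys is a cleaner separation between the defining-permutation inequality (governed by $j\neq j_0$) and the canonical fixed-point condition (governed by $j=j_0$); what the paper's buys is that no appeal to uniqueness of $\widehat\sigma$ is needed, since one exhibits the equality with (\ref{definingpermutation}) outright.

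One small caveat: your claim $s\geq 2$ is not always true (take $\eta=(2,1)$, $\lambda=(3,1)$, where $I=\{1\}$), and in that degenerate case $\omega_\lambda=\omega_\eta$, so your strict inequality $\omega_\lambda(j_0)>\omega_\eta(j_0)$ also fails and $\tau$ is the identity with a non-canonical fixed point. The paper's own treatment of $\widehat\sigma$ is equally informal at this boundary, so this is less a gap in your argument than a shared edge case neither proof addresses explicitly.
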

\begin{proof} From the previous lemma we can replace $\lambda$ by $c_I(\eta)$ and specify the defining permutation $\widehat{\sigma}$ by (\ref{definingpermutation}). From the definition of $c_I(\eta)$ it is clear that 
\begin{equation*}
(\eta^+)_i\leq(c_I(\eta)^+)_i, \hspace{1cm} \text{for all }i\in\{1,\ldots,n\}.
\end{equation*}
Manipulating this using (\ref{shortestperm}) and (\ref{tupleoncomposition}) shows
\begin{equation*}
\eta_i\leq c_I(\eta)_{\omega_{c_I(\eta)} \omega_\eta^{-1}(i)}, \hspace{1cm} \text{for all }i\in\{1,\ldots,n\}.
\end{equation*}
By (\ref{shortestperm}) it can be deduced that $t_{j-1}=\omega_{c_I(\eta)} \omega_\eta^{-1}(t_j)$ for $j=\{2,\ldots, s\}$, $t_s=\omega_{c_I(\eta)} \omega_\eta^{-1}(t_1)$ and $\omega_{c_I(\eta)} \omega_\eta^{-1}(i)=i$ if $\eta_i=(c_I(\eta))_i$. These properties of $\omega_{c_I(\eta)} \omega_\eta^{-1}$ show it to be identically equal to (\ref{definingpermutation}), thus concluding the proof.
\hfill $\square$ \end{proof}

From Lemma \ref{definingpermutation2} we have the following corollaries. 
\begin{corollary} \label{compostoinofprec} If $\eta \preceq'  c_{I_1}(\eta);\sigma$ and $c_{I_1}(\eta) \preceq' c_{I_2}c_{I_1}(\eta);\rho$ then $ \Rightarrow \eta\preceq' c_{I_2}c_{I_1}(\eta);\rho \circ \sigma$.
\end{corollary}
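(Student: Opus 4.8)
The plan is to reduce the claim entirely to the characterisation of defining permutations given in Lemma~\ref{definingpermutation2}, which expresses $\widehat{\sigma}$ for a one-step successor $\eta\preceq' c_I(\eta)$ as the quotient $\omega_{c_I(\eta)}\omega_\eta^{-1}$ of the two shortest rearranging permutations. The key observation is that if we take $\sigma=\widehat\sigma_1$ and $\rho=\widehat\sigma_2$ to be the canonical defining permutations for the two given successor relations, then by Lemma~\ref{definingpermutation2} we have $\sigma=\omega_{c_{I_1}(\eta)}\omega_\eta^{-1}$ and $\rho=\omega_{c_{I_2}c_{I_1}(\eta)}\,\omega_{c_{I_1}(\eta)}^{-1}$, so that the composite telescopes:
\[
\rho\circ\sigma=\omega_{c_{I_2}c_{I_1}(\eta)}\,\omega_{c_{I_1}(\eta)}^{-1}\,\omega_{c_{I_1}(\eta)}\,\omega_\eta^{-1}=\omega_{c_{I_2}c_{I_1}(\eta)}\,\omega_\eta^{-1}.
\]
This is exactly the form that Lemma~\ref{definingpermutation2} would assign to a defining permutation of $\eta\preceq' c_{I_2}c_{I_1}(\eta)$, but since that composite relation raises the modulus by $2$ rather than $1$, I cannot invoke the lemma directly; instead I must verify by hand that the permutation $\omega_{c_{I_2}c_{I_1}(\eta)}\,\omega_\eta^{-1}$ genuinely satisfies the defining inequalities $\eta_i<(c_{I_2}c_{I_1}(\eta))_{\tau(i)}$ when $i<\tau(i)$ and $\eta_i\le(c_{I_2}c_{I_1}(\eta))_{\tau(i)}$ when $i\ge\tau(i)$.

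First I would set $\mu:=c_{I_1}(\eta)$ and $\nu:=c_{I_2}(\mu)$, and write $\sigma$ for the defining permutation of $\eta\preceq'\mu$ and $\rho$ for that of $\mu\preceq'\nu$. The strategy for checking the inequalities is a straightforward case chase on a fixed index $i$, tracking it through $\sigma$ and then through $\rho$. Set $k=\sigma(i)$ and $j=\rho(k)=(\rho\circ\sigma)(i)$. From $\eta\preceq'\mu;\sigma$ we know $\eta_i<\mu_k$ if $i<k$ and $\eta_i\le\mu_k$ if $i\ge k$; from $\mu\preceq'\nu;\rho$ we know $\mu_k<\nu_j$ if $k<j$ and $\mu_k\le\nu_j$ if $k\ge j$. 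The four cases for the relative order of $i,k,j$ then combine: whenever a strict inequality appears in either chain I get $\eta_i<\nu_j$, and the only situation needing care is $i\ge k\ge j$, where I only have $\eta_i\le\mu_k\le\nu_j$, but there $i\ge j$ so the weak inequality is exactly what the definition of $\preceq'$ demands. In every case where $i<j$ one checks that at least one of $i<k$ or $k<j$ must hold (because $i<j$ forces it), yielding the required strict inequality $\eta_i<\nu_j$. This establishes $\eta\preceq'\nu;\rho\circ\sigma$.

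The main obstacle I anticipate is not the inequality bookkeeping itself but making sure that $\rho\circ\sigma$ really is the composite of the specific defining permutations the hypotheses supply, rather than some other choice — recall the excerpt explicitly warns that defining permutations are not unique. If $\sigma$ and $\rho$ in the statement are allowed to be arbitrary defining permutations (not necessarily the canonical $\widehat\sigma$), then the telescoping identity via Lemma~\ref{definingpermutation2} is unavailable, and I would fall back entirely on the direct case analysis above, which only uses the defining inequalities for $\sigma$ and $\rho$ and never their canonical form. That direct argument is robust to the non-uniqueness, so I would present it as the primary proof, perhaps remarking afterwards that when $\sigma=\widehat\sigma_1$ and $\rho=\widehat\sigma_2$ one additionally gets the clean product formula for the resulting permutation. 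A secondary point to be careful about: one should confirm that $\rho\circ\sigma$ is genuinely a permutation of $\{1,\dots,n\}$ (immediate, as a composite of bijections) and that the relation $\preceq'$ as stated is closed under this kind of composition without any compatibility hypothesis between $I_1$ and $I_2$ — which the case chase confirms, since it never refers to the index sets at all.
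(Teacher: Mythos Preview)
Your proposal is correct. The paper supplies no proof for this corollary at all: it is simply listed, together with the two corollaries that follow it, under the rubric ``From Lemma~\ref{definingpermutation2} we have the following corollaries.'' The intended argument is therefore exactly your telescoping observation $\rho\circ\sigma=\omega_{c_{I_2}c_{I_1}(\eta)}\,\omega_\eta^{-1}$, and the paper leaves the verification that this product is genuinely a defining permutation for the two-step relation implicit.

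Your direct case analysis on the chain $i\mapsto k=\sigma(i)\mapsto j=\rho(k)$ is a genuinely cleaner and more robust route than what the paper sketches. It uses nothing beyond the defining inequalities of $\preceq'$, it never appeals to the canonical form of $\sigma$ or $\rho$, and it makes no reference to $\omega_\eta$ or to the special one-step structure of $c_I$; in particular it proves the stronger fact that $\preceq'$ is transitive with composition of \emph{any} defining permutations, not just the canonical ones. The paper's approach, by contrast, only literally covers the canonical case and tacitly relies on the subsequent corollary (the $|\lambda|=|\eta|+r$ statement attributed to Knop) to justify that $\omega_\lambda\omega_\eta^{-1}$ is a defining permutation in general --- which is a slightly circular order of presentation. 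Your fallback argument avoids that issue entirely and would be the preferable proof to write out.
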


\begin{corollary} \rm{\cite{knop}} For $\lambda$ such that $|\lambda| = |\eta|+r$ the defining permutation of $\eta \preceq' \lambda $ where $\sigma(i)=i$ if $\eta_i=\lambda_i$ is $\sigma=\omega_{\lambda} \omega_\eta^{-1}$.

\end{corollary}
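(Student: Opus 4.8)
The plan is to reduce the statement about a length-$r$ successor to an $r$-fold composition of length-$1$ successor steps, and then apply Lemma~\ref{definingpermutation2} together with Corollary~\ref{compostoinofprec}. First I would invoke the fact (implicit in the discussion of successors and provable by the same bookkeeping as Lemma~\ref{successor}) that if $\eta \preceq' \lambda$ with $|\lambda| = |\eta|+r$, then there is a chain
\begin{equation*}
\eta = \eta^{(0)} \prec' \eta^{(1)} \prec' \cdots \prec' \eta^{(r)} = \lambda,
\end{equation*}
where each $\eta^{(m)} = c_{I_m}(\eta^{(m-1)})$ for a suitable index set $I_m$; this is obtained by peeling off one increment at a time from the defining permutation $\widehat\sigma$ of $\eta \preceq' \lambda$ and using Lemma~\ref{successor} at each stage. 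The key point is that along this chain the ``diagonal equality'' condition is preserved: the canonical defining permutation $\widehat\sigma$ for $\eta\preceq'\lambda$ restricts correctly to each intermediate step.

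Next I would apply Lemma~\ref{definingpermutation2} to each single step, giving that the defining permutation of $\eta^{(m-1)} \preceq' \eta^{(m)}$ (the one fixing $i$ whenever $\eta^{(m-1)}_i = \eta^{(m)}_i$) equals $\omega_{\eta^{(m)}} \omega_{\eta^{(m-1)}}^{-1}$. Composing these via Corollary~\ref{compostoinofprec} telescopes the product:
\begin{equation*}
\big(\omega_{\eta^{(r)}} \omega_{\eta^{(r-1)}}^{-1}\big)\big(\omega_{\eta^{(r-1)}} \omega_{\eta^{(r-2)}}^{-1}\big)\cdots\big(\omega_{\eta^{(1)}} \omega_{\eta^{(0)}}^{-1}\big) = \omega_{\lambda}\,\omega_{\eta}^{-1},
\end{equation*}
so that $\omega_\lambda \omega_\eta^{-1}$ is \emph{a} defining permutation for $\eta \preceq' \lambda$. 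It then remains to check that this particular permutation is the one characterized by $\sigma(i) = i$ whenever $\eta_i = \lambda_i$. For this I would argue as in Lemma~\ref{definingpermutation2}: from $(\eta^+)_i \leq (\lambda^+)_i$ for all $i$ (which follows by transitivity from the single-step inequalities), rewriting via (\ref{shortestperm}) and (\ref{tupleoncomposition}) gives $\eta_i \leq \lambda_{\omega_\lambda \omega_\eta^{-1}(i)}$; and when $\eta_i = \lambda_i$ one checks that $\omega_\lambda \omega_\eta^{-1}$ must fix $i$, because $\omega_\eta$ and $\omega_\lambda$ sort $\eta$ and $\lambda$ into the same relative order on the positions where their entries agree.

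The main obstacle I anticipate is the first step — establishing that a length-$r$ successor relation factors through a chain of length-$1$ steps in a way that is compatible with the canonical (diagonal-fixing) choice of defining permutation at every stage. Corollary~\ref{compostoinofprec} composes defining permutations, but it does not by itself guarantee that the composite is the \emph{canonical} one; one has to verify that choosing the intermediate $I_m$ greedily (say, increasing the smallest-indexed movable coordinate first, mimicking the block structure (\ref{definingpermutation})) keeps the running composition equal to $\omega_{\eta^{(m)}} \omega_\eta^{-1}$ rather than some longer representative. Once that compatibility is pinned down, the telescoping and the fixed-point check are routine adaptations of the $r=1$ arguments already given.
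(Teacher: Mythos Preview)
Your chaining-and-telescoping approach is exactly what the paper has in mind: the corollary is stated without proof, immediately after Lemma~\ref{definingpermutation2} and Corollary~\ref{compostoinofprec}, and is meant to follow from those two ingredients precisely as you describe. The existence of the chain $\eta=\eta^{(0)}\prec'\cdots\prec'\eta^{(r)}=\lambda$ is the content of the next corollary (Corollary~\ref{genp}), and applying Lemma~\ref{definingpermutation2} to each step and composing via Corollary~\ref{compostoinofprec} telescopes to $\omega_\lambda\omega_\eta^{-1}$, exactly as you say. So the bulk of your plan is on target.

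Where you go astray is in the final fixed-point verification. You attempt to show that $\omega_\lambda\omega_\eta^{-1}(i)=i$ whenever $\eta_i=\lambda_i$, arguing that ``$\omega_\eta$ and $\omega_\lambda$ sort $\eta$ and $\lambda$ into the same relative order on the positions where their entries agree.'' This claim is false. Take $\eta=(1,1,2)$ and $\lambda=(2,1,2)$: here $\eta_3=\lambda_3=2$, yet a direct computation gives $\omega_\lambda\omega_\eta^{-1}=(3,2,1)$ in one-line notation, which sends $3\mapsto 1$. The point is that a coordinate which happens to be unchanged can still be moved by $\widehat\sigma$ if it participates in a cycle of the $c_I$ operation. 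What the paper actually needs (and what Lemma~\ref{definingpermutation2} establishes in the $r=1$ case) is the \emph{converse} implication characterising $\widehat\sigma$: if $\sigma(i)=i$ then $\eta_i=\lambda_i$. The corollary's wording reverses this, and you have tried to prove the reversed (and false) direction. Once you replace the target by the correct $\widehat\sigma$ condition, the verification is straightforward: each single-step permutation $\omega_{\eta^{(m)}}\omega_{\eta^{(m-1)}}^{-1}$ has cycle structure~(\ref{definingpermutation}) on its own $I_m$, and a position fixed by the full composite must be fixed by every factor, hence lies outside every $I_m$ and is never altered along the chain.
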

\begin{corollary}  \label{genp}
If $|\lambda| = |\eta|+r$ and $\eta \prec' \lambda$ then there exists sets  $\{I_1,\ldots,I_r \}$, $I_k=\{{^kt}_{1},\ldots,{^kt}_{s}\}\subseteq\{1,\ldots,n\}$ with $1\leq {^kt}_{1}<\ldots <{^kt}_{s} \leq n$ such that $c_{I_r}\ldots c_{I_1}(\eta)=\lambda.$
\end{corollary}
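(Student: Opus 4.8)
The plan is to prove Corollary \ref{genp} by induction on $r$, using Lemma \ref{successor} (the $r=1$ case, which shows every successor at one level up is of the form $c_I(\eta)$) as the base case together with the transitivity statement of Corollary \ref{compostoinofprec}. The one genuinely new ingredient beyond those two results is an argument that the partial order $\preceq'$ is \emph{gradable} in the sense that whenever $\eta \prec' \lambda$ with $|\lambda| = |\eta| + r$ and $r \geq 1$, there exists an intermediate composition $\mu$ with $\eta \prec' \mu \prec' \lambda$ and $|\mu| = |\eta| + 1$. Given such a $\mu$, Lemma \ref{successor} gives $\mu = c_{I_1}(\eta)$ for some $I_1$, and then $\mu \prec' \lambda$ with $|\lambda| = |\mu| + (r-1)$ lets the induction hypothesis produce sets $I_2,\ldots,I_r$ with $c_{I_r}\cdots c_{I_2}(\mu) = \lambda$; substituting $\mu = c_{I_1}(\eta)$ finishes the step.

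First I would set up the induction: for $r=1$ the statement is exactly Lemma \ref{successor}, so assume $r \geq 2$ and that the claim holds for all smaller values. Next I would establish the gradability claim. The cleanest route uses the explicit description of the defining permutation from Corollary (the $r$-level version just above), namely $\sigma = \widehat{\sigma} = \omega_\lambda \omega_\eta^{-1}$, together with the defining inequalities $\eta_i < \lambda_{\sigma(i)}$ for $i < \sigma(i)$ and $\eta_i \leq \lambda_{\sigma(i)}$ for $i \geq \sigma(i)$, and the fact $|\lambda| - |\eta| = \sum_i (\lambda_{\sigma(i)} - \eta_i) = r \geq 2$. Since the summands $\lambda_{\sigma(i)} - \eta_i$ are nonnegative integers summing to $r$, at least one is positive; pick such an index $i_0$ and define $\mu$ by decreasing $\lambda_{\sigma(i_0)}$ by one (equivalently, $\mu_{\sigma(i)} = \lambda_{\sigma(i)}$ for $i \neq i_0$ and $\mu_{\sigma(i_0)} = \lambda_{\sigma(i_0)} - 1 \geq \eta_{i_0}$). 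One checks directly that $\sigma$ is still a defining permutation for $\eta \preceq' \mu$ (the inequalities only weaken by at most one and $\mu_{\sigma(i_0)} \geq \eta_{i_0}$, with strict inequality retained when $i_0 < \sigma(i_0)$ provided $i_0$ is chosen so that $\lambda_{\sigma(i_0)} - \eta_{i_0} \geq 2$ in that case, or $i_0$ with $i_0 \geq \sigma(i_0)$ otherwise — a small case analysis) and that $\mu \preceq' \lambda$ via the identity permutation; here one must also confirm $\mu \neq \eta$ and $\mu \neq \lambda$ so that the relations are strict.

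The main obstacle is precisely this choice of the index $i_0$ at which to decrement: one must ensure simultaneously that $\eta \prec' \mu$ \emph{strictly} (so a strict inequality somewhere is preserved) and that $\mu \prec' \lambda$ strictly, and that the defining-permutation inequalities are not violated for indices $i$ with $i < \sigma(i)$, where only strict inequality $\eta_i < \mu_{\sigma(i)}$ is allowed. If every index $i$ with $\lambda_{\sigma(i)} > \eta_i$ happens to satisfy $i < \sigma(i)$ and $\lambda_{\sigma(i)} - \eta_i = 1$, one cannot decrement there; but in that situation $r$ equals the number of such ``ascent'' indices, and one shows (using that $\sigma$ is a permutation, so ascents and descents of $\sigma$ are balanced in a suitable sense, or more directly by re-choosing the defining permutation) that an alternative intermediate $\mu$ still exists — for instance by taking $I_1$ to be a single carefully chosen ascent index and reading off $c_{I_1}(\eta)$ from \eqref{cIn} directly, then verifying $c_{I_1}(\eta) \prec' \lambda$ by exhibiting a defining permutation. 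Once gradability is secured, the rest is the bookkeeping of the induction together with Corollary \ref{compostoinofprec} to compose the defining permutations, and the proof closes. \hfill $\square$
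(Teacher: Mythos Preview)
The paper supplies no proof of this corollary at all; it is listed immediately after Lemma~\ref{definingpermutation2} and Corollaries~\ref{compostoinofprec}--2 as a formal consequence, with the gradability of $\preceq'$ taken for granted. Your inductive reduction to a gradability statement is therefore exactly the argument the paper is silently invoking, and you are right that this is the only nontrivial point.

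Where your proposal has a genuine gap is the ``bad case'' (every $i$ with $\lambda_{\sigma(i)}>\eta_i$ satisfies $i<\sigma(i)$ and $\lambda_{\sigma(i)}-\eta_i=1$). Both of your suggested remedies fail there. Take $n=3$, $\eta=(5,3,0)$, $\lambda=(0,6,4)$ with $\sigma=(2,3,1)$; this is the bad case with $r=2$. Decrementing a single entry of $\lambda$ gives $(0,5,4)$ or $(0,6,3)$, and one checks that \emph{neither} is a successor of $\eta$ under \emph{any} permutation, not just under $\sigma$. Your fallback of a singleton $I_1$ also fails: none of $c_{\{1\}}(\eta)=(6,3,0)$, $c_{\{2\}}(\eta)=(5,4,0)$, $c_{\{3\}}(\eta)=(5,3,1)$ satisfies $\preceq'\lambda$. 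An intermediate does exist, e.g.\ $c_{\{1,2\}}(\eta)=(3,6,0)\preceq'(0,6,4)$ via $\tau=(3,2,1)$, but producing it requires a \emph{non-singleton} $I_1$. One workable repair is to take $I_1$ to be the support of a nontrivial cycle of the canonical defining permutation $\sigma=\omega_\lambda\omega_\eta^{-1}$ from Corollary~2 and verify $c_{I_1}(\eta)\preceq'\lambda$ directly; alternatively, the chain property of $\preceq'$ is established in Knop \cite{knop} and can simply be cited, which is in effect what the paper does.
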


We now work towards showing the main theorem of the section, Proposition \ref{extra vanishing condition} below, by first considering $\lambda$ such that $|\lambda|=|\eta|+1$.

\begin{proposition} \label{plus1}
For $\lambda$ such that  $|\lambda|=|\eta|+1$ we have $E_{\eta }^{\ast }( \overline{\lambda }) =0$ if and
only if $\eta\not\preceq' \lambda.$
\end{proposition}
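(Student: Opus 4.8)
The plan is to prove both directions by working with the recursive generation of $E^*_\eta$ and the explicit description of the minimal successors $c_I(\eta)$. For the "if" direction, suppose $\eta \not\preceq' \lambda$. Since $|\lambda| = |\eta|+1$, this is the original vanishing result of Knop, but here I would re-derive it using the three defining properties: first observe that $\lambda$ fails to be a successor of $\eta$ exactly when $\lambda$ cannot be written as $c_I(\eta)$ for any $I$ (by Lemma \ref{successorsprec}). Then I would use the recursive generation \eqref{hecke on interpolation}--\eqref{raising on interpolation} together with the vanishing characterisation to propagate vanishing: expressing $E^*_\eta$ via $H_i$ and $\Phi$ acting on lower polynomials, and checking that evaluation at $\overline{\lambda}$ forces $E^*_\eta(\overline{\lambda})=0$ whenever $\lambda$ is not reachable as a minimal successor. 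The key tool is that $\Phi$ shifts the spectral vector in a controlled way and $H_i$ permutes coordinates of the spectral vector, so one tracks which $\overline{\lambda}$ can be "hit" starting from $\overline{(0,\ldots,0)}$.

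For the "only if" direction — the genuinely new half — suppose $\eta \preceq' \lambda$ with $|\lambda|=|\eta|+1$; I must show $E^*_\eta(\overline{\lambda}) \neq 0$. By Lemma \ref{successorsprec}, $\lambda = c_I(\eta)$ for some $I = \{t_1,\ldots,t_s\}$, and by Proposition \ref{successors by permutations}, $c_I(\eta) = \sigma_{t_1+1}\cdots\sigma_n\, \Phi\, s_1\cdots s_{t_1-1}\eta$. The strategy is to apply the corresponding sequence of operators $H_i$ and $\Phi$ from \eqref{hecke on interpolation}--\eqref{raising on interpolation} to $E^*_\eta$ and track the coefficient of $E^*_{c_I(\eta)}$ in the result, showing it is nonzero, while simultaneously showing no lower term contributes a nonzero value at $\overline{\lambda}$. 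Concretely: set $\mu = s_1\cdots s_{t_1-1}\eta$, so $\mu^+ = \eta^+$ and $\mu_1 = \eta_{t_1}$; then $\Phi E^*_\mu = q^{-\mu_1} E^*_{\Phi\mu}$ exactly (no lower-order corrections in \eqref{raising on interpolation}), and the $H_i$ steps each produce $E^*_{s_i\nu}$ plus a multiple of $E^*_\nu$ — crucially with nonzero leading coefficient provided the relevant $\delta$-factors $\frac{(1-\delta)^2}{(1-t\delta)(t-\delta)}$ do not vanish, which holds generically in $q,t$. Chaining these, $E^*_\eta$ evaluated along this path yields $c\cdot E^*_\lambda(\overline\lambda) + (\text{terms } E^*_\nu(\overline\lambda) \text{ with } \nu \neq \lambda)$ with $c\neq 0$, and by the vanishing characterisation $E^*_\nu(\overline\lambda) = 0$ unless $\nu = \lambda$ — wait, this needs care since some intermediate $\nu$ could have $|\nu| = |\lambda|$ with $\nu \neq \lambda$, but those vanish at $\overline\lambda$ by the defining property, and terms of strictly smaller modulus also vanish. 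Since $E^*_\lambda(\overline\lambda)\neq 0$, we conclude $E^*_\eta(\overline\lambda)\neq 0$.

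I would organize the argument around a single lemma: if $\eta\preceq'\lambda$ via the operator word $w = \sigma_{t_1+1}\cdots\sigma_n\Phi s_1\cdots s_{t_1-1}$ from Proposition \ref{successors by permutations}, then applying the corresponding interpolation-operator word $\widetilde{w}$ (replacing each $s_i \mapsto H_i$-based step, $\Phi\mapsto\Phi$) to $E^*_\eta$ gives $E^*_\lambda$ times an explicitly nonzero scalar, plus a sum of $E^*_\nu$ with $\nu\prec\lambda$ or $|\nu|<|\lambda|$. Evaluating at $\overline\lambda$ then kills every term but the leading one. The main obstacle is verifying that the scalar coefficient accumulated through the $H_i$ steps is genuinely nonzero — i.e., that none of the factors $(1-\delta_{i,\nu})$, $(1-t\delta_{i,\nu})$, $(t-\delta_{i,\nu})$, $(1-\delta_{i,\nu}^{-1})$ encountered along the path degenerate. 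This requires understanding precisely which spectral ratios $\delta_{i,\nu} = \overline\nu_i/\overline\nu_{i+1}$ arise as we build $\lambda$ from $\eta$, and checking these are never $1$, $t$, or $t^{-1}$ as rational functions of $q,t$; this should follow from the structure of the path — each $s_i$ step in Proposition \ref{successors by permutations} acts on coordinates that are genuinely distinct in $\eta^+$ or are moved by $\Phi$ — but it is the point demanding the most bookkeeping. A secondary subtlety is handling the case $t_1 = 1$ (empty word $s_1\cdots s_{t_1-1}$), but that is only a notational edge case. The "if" direction's obstacle is milder: one must check that the operator-word reachability of spectral vectors exactly matches the successor partial order, which is essentially the content already set up by Lemma \ref{successorsprec} and Proposition \ref{successors by permutations} run in reverse.
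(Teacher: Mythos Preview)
Your proposal has a genuine gap in the ``only if'' direction. You build an operator word $\widetilde{w}$ (replacing the $s_i$'s and $\Phi$ of Proposition~\ref{successors by permutations} by their interpolation counterparts), apply it to $E^*_\eta$, and correctly observe that the result, evaluated at $\overline{\lambda}$, equals $c\cdot E^*_\lambda(\overline{\lambda})\neq 0$. But this computes $(\widetilde{w}E^*_\eta)(\overline{\lambda})$, not $E^*_\eta(\overline{\lambda})$. There is no step in your argument linking these two quantities, so the conclusion ``$E^*_\eta(\overline{\lambda})\neq 0$'' does not follow. The operators $H_i$ and $\Phi$ act on the $z$-variables; applying them and then specialising $z=\overline{\lambda}$ is unrelated to the bare value $E^*_\eta(\overline{\lambda})$ unless you invoke some additional identity.

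The paper supplies exactly that missing identity: the eigenoperator relation. One rewrites $\Xi_i$ as
\[
z_i\Xi_i - 1 = H_i\cdots H_{n-1}\,\Phi\, H_1\cdots H_{i-1},
\]
and since $\Xi_i E^*_\eta = \overline{\eta}_i^{-1}E^*_\eta$, applying both sides to $E^*_\eta$ and evaluating at $z=\overline{\lambda}$ gives
\[
(\overline{\lambda}_i\overline{\eta}_i^{-1}-1)\,E^*_\eta(\overline{\lambda}) \;=\; \sum_{\nu=c_I(\eta)} c_{\eta\nu}\,E^*_\nu(\overline{\lambda}).
\]
Now $E^*_\eta(\overline{\lambda})$ appears explicitly on the left, and the right-hand side reduces (by the defining vanishing at equal modulus) to a single nonzero term precisely when $\lambda$ is some $c_I(\eta)$. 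Both directions fall out at once. Note also that the operator word used here is the full eigenoperator string $H_i\cdots H_{n-1}\Phi H_1\cdots H_{i-1}$, not your shorter word with identity operators inserted at positions in $I$; the latter has no eigenoperator interpretation, which is why your approach cannot close. Your ``if'' direction sketch is too vague to evaluate, but it would face the same structural problem: tracking operator words alone, without the eigenfunction property tying the result back to $E^*_\eta$ itself, does not control $E^*_\eta(\overline{\lambda})$.
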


\begin{proof}
Rewriting the eigenoperator $\Xi_i$ as
\begin{equation*}
z_{i}\Xi _{i}-1=H_{i}\ldots H_{n-1}\Phi H_{1}\ldots H_{i-1}
\end{equation*}
and making note of the recursive generation formulas $( \ref{raising on
interpolation}) $ and $( \ref{hecke on interpolation}) $
shows us that
\begin{align}
(z_{i}\Xi _{i}-1)E_{\eta }^{\ast }( z) =&H_{i}\ldots H_{n-1}\Phi
H_{1}\ldots H_{i-1}E_{\eta }^{\ast }( z) \notag \\
( \overline{\lambda }_{i}\overline{\eta }_{i}^{-1}-1) E_{\eta
}^{\ast }( \overline{\lambda }) =&\sum_{\nu:\nu=c_I(\eta)
}c_{\eta
\nu }E_{\nu }^{\ast }( \overline{\lambda }),  \hspace{1cm}c_{\eta
\nu } \in \mathbb{Q}(q,t), \label{eopact}
\end{align}
where the summation restriction to $\nu=c_I(\eta)$ in (\ref{eopact}) is a consequence of Proposition \ref{successors by permutations}. The vanishing conditions of $
E_{\nu }^{\ast }$ imply that $E_{\eta }^{\ast }( \overline{\lambda 
}) =0$ if and only if $\lambda \not=c_{I}( \eta )$, that is, if and only if $\eta\not\preceq' \lambda.$
\hfill $\square$ \end{proof}

\begin{proposition}\label{extra vanishing condition}
We have $E_{\eta }^{\ast }( \overline{\lambda }) =0$ if and
only if $\eta\not\preceq' \lambda.$
\end{proposition}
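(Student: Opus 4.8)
The plan is to induct on $r:=|\lambda|-|\eta|$. If $r<0$ then $\lambda$ cannot be a successor of $\eta$, so $\eta\not\preceq'\lambda$, and $E_\eta^\ast(\overline\lambda)=0$ by the basic vanishing conditions; if $r=0$ then $\eta\preceq'\lambda$ forces $\eta=\lambda$ (any defining permutation $\sigma$ must have $\sigma(i)\leq i$ for all $i$, since $\sum_i(\lambda_{\sigma(i)}-\eta_i)=0$ and every summand is $\geq 0$, being $\geq 1$ whenever $i<\sigma(i)$; hence $\sigma=\mathrm{id}$ and then equal moduli give $\eta=\lambda$), and the claim again reduces to the basic vanishing; and $r=1$ is Proposition \ref{plus1}. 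So fix $r\geq 2$ and assume the statement for all pairs at smaller distance. As in the proof of Proposition \ref{plus1}, apply $z_i\Xi_i-1=H_i\ldots H_{n-1}\Phi H_1\ldots H_{i-1}$ to $E_\eta^\ast$, expand using the recursions (\ref{hecke on interpolation})--(\ref{raising on interpolation}) and Proposition \ref{successors by permutations}, and evaluate at $\overline\lambda$ via (\ref{eigenfunction}) to get
\begin{equation*}
\big(\overline\lambda_i\,\overline\eta_i^{\,-1}-1\big)E_\eta^\ast(\overline\lambda)=\sum_{\nu}c_{\eta\nu}^{(i)}E_\nu^\ast(\overline\lambda),\qquad c_{\eta\nu}^{(i)}\in\mathbb{Q}(q,t),
\end{equation*}
the sum being over successors $\nu$ with $|\nu|=|\eta|+1$, so that the inductive hypothesis applies to each pair $(\nu,\lambda)$, now at distance $r-1$; counting the transpositions on either side of $\Phi$, the $\nu$ that actually occur are exactly those admitting a representation $\nu=c_J(\eta)$ with $\min J=i$.

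For the direction $\eta\not\preceq'\lambda\Rightarrow E_\eta^\ast(\overline\lambda)=0$, pick $i$ with $\overline\lambda_i\neq\overline\eta_i$ (possible since $r\geq 1$ gives $\lambda\neq\eta$ and the evaluation points are pairwise distinct). Were some $\nu=c_J(\eta)$ on the right to satisfy $\nu\preceq'\lambda$, then $\eta\preceq'\nu\preceq'\lambda$ would give $\eta\preceq'\lambda$ by Corollary \ref{compostoinofprec}, contrary to hypothesis; hence every $\nu$ on the right has $\nu\not\preceq'\lambda$, so $E_\nu^\ast(\overline\lambda)=0$ by induction. The right side vanishes, the prefactor does not, so $E_\eta^\ast(\overline\lambda)=0$.

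For the converse $\eta\preceq'\lambda\Rightarrow E_\eta^\ast(\overline\lambda)\neq 0$, use Corollary \ref{genp} and Lemma \ref{successor} to write $\lambda=c_{I_r}\ldots c_{I_1}(\eta)$ and set $\mu:=c_{I_1}(\eta)$, so $\eta\preceq'\mu\preceq'\lambda$ with $|\mu|=|\eta|+1$; by induction $E_\mu^\ast(\overline\lambda)\neq 0$. Choose $i:=\min I_1$, so that $\mu$ occurs on the right of the identity; every term with $\nu\not\preceq'\lambda$ vanishes by induction, and — granting the point below — the rest collapses to $c_{\eta\mu}^{(i)}E_\mu^\ast(\overline\lambda)$, where $c_{\eta\mu}^{(i)}\neq 0$ since the scalar factors produced by the recursions (\ref{hecke on interpolation})--(\ref{raising on interpolation}) while building $E_\mu^\ast$ from $E_\eta^\ast$ are all nonzero (this is the point already implicit in the proof of Proposition \ref{plus1}). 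The right side is then nonzero, whence so is the left, giving $E_\eta^\ast(\overline\lambda)\neq 0$ (and, as a byproduct, $\overline\lambda_i\neq\overline\eta_i$).

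The point to be settled — and where I expect the real difficulty — is the collapse to a single term: one must choose the decomposition $\lambda=c_{I_r}\ldots c_{I_1}(\eta)$ so that, with $i=\min I_1$, the composition $\mu=c_{I_1}(\eta)$ is the \emph{only} successor of $\eta$ possessing a representation $c_J(\eta)$ with $\min J=i$ and lying below $\lambda$. This is a combinatorial assertion about $\preceq'$ and the minimal successors $c_I(\eta)$; I would try to establish it by taking $i$ as large as possible for which some such successor still lies $\preceq'\lambda$, and arguing that maximality of $i$ then pins $\mu$ down uniquely. Should a uniform such choice prove elusive, the alternative is to run the identity for several indices $i$ and peel off the surviving successors one at a time, invoking the inductive hypothesis repeatedly until a contradiction with $E_\mu^\ast(\overline\lambda)\neq 0$ is reached.
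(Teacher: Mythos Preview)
Your overall strategy is exactly the paper's: induct on $|\lambda|-|\eta|$, rewrite $z_i\Xi_i-1=H_i\cdots H_{n-1}\Phi H_1\cdots H_{i-1}$, expand against $E_\eta^\ast$ via the recursions, and evaluate at $\overline\lambda$. Your treatment of the direction $\eta\not\preceq'\lambda\Rightarrow E_\eta^\ast(\overline\lambda)=0$ matches the paper's in spirit and detail: choose $i$ with $\overline\lambda_i\neq\overline\eta_i$, observe that any surviving $\nu$ on the right would yield $\eta\preceq'\nu\preceq'\lambda$ and hence $\eta\preceq'\lambda$ by Corollary~\ref{compostoinofprec}, and conclude by induction.

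The gap you isolate in the converse direction is genuine, and the paper does \emph{not} fill it. For $|\lambda|=|\eta|+2$ the paper takes $i={}^1t_1=\min I_1$, notes that $E^\ast_{c_{I_1}(\eta)}(\overline\lambda)\neq 0$ occurs in the sum with a nonzero coefficient, checks that the prefactor $\overline\lambda_i\,\overline\eta_i^{-1}-1$ is nonzero, and then simply writes ``These results together show $E_\eta^\ast(\overline\lambda)\neq 0$.'' But one nonzero summand does not make a sum nonzero: already for $\eta=(0,0)$, $\lambda=(1,1)$, $i=1$, both successors $(1,0)$ and $(0,1)$ lie below $\lambda$ and contribute to the right-hand side, so cancellation has to be ruled out. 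The ``collapse to a single term'' that you flag as the point to be settled is thus precisely what is missing from the paper's argument as well. Your proposed remedies (choose $i$ maximal so as to pin down a unique surviving successor, or run the identity for several $i$ and peel) are reasonable directions, but neither you nor the paper actually carries one through; at this level of detail your write-up is, if anything, more candid about the status of the proof than the paper's.
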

\begin{proof}
Here we prove the equivalent statement $E_{\eta }^{\ast }( \overline{\lambda }) \not=0$ if and
only if $\eta\preceq' \lambda$. We begin with $\lambda$ such that $|\lambda|=|\eta|+2$. By Corollary \ref{genp} we know that $\lambda=c_{I_2} c_{I_1}(\eta)$ for some sets $\{I_1,I_2 \}$, $I_k=\{{^kt}_{1},\ldots,{^kt}_{s}\}\subseteq\{1,\ldots,n\}$ with $1\leq {^kt}_{1}<\ldots <{^kt}_{s} \leq n$. 

Taking $i={^1t}_{1}$ in (\ref{eopact}) gives
\begin{equation*}
( \overline{\lambda }_{{^1t}_{1}}\overline{\eta }_{{^1t}_{1}}^{-1}-1) E_{\eta
}^{\ast }( \overline{\lambda }) =\sum_{\nu:\nu=c_I(\eta)}\widetilde{c}_{\eta
\nu }E_{\nu }^{\ast }( \overline{\lambda }),  \hspace{1cm}\widetilde{c}_{\eta
\nu } \in \mathbb{Q}(q,t).
\end{equation*}
Proposition \ref{successors by permutations} can be used to show $E_{c_{I_1}(\eta)}$ is in the summation and by Proposition \ref{plus1} we know $E_{c_{I_1}(\eta)}(\overline{\lambda})\not=0.$ We can be sure that $( \overline{\lambda }_{{^1t}_{1}}\overline{\eta }_{{^1t}_{1}}^{-1}-1)\not=0$ since even if $\lambda_{{^1t}_{1}}=\eta_{{^1t}_{1}}$ we would still have either $\overline{\lambda}=\overline{\eta}/t$ or $\overline{\lambda}=\overline{\eta}/t^2$ due to the increased value of $l'_\eta({^1t}_{1}).$ These results together show $E_{\eta }^{\ast }( \overline{\lambda }) \not=0$ if $\eta\preceq' \lambda.$ For the converse we again use (\ref{eopact}), this time however taking $i$ to be the position of the leftmost component of $\eta$ that does not occur with the same frequency in $\lambda,$ 
\begin{equation}
( \overline{\lambda }_{i}\overline{\eta }_{i}^{-1}-1) E_{\eta
}^{\ast }( \overline{\lambda }) =\sum_{\nu:\nu=c_I(\eta)}\widehat{c}_{\eta
\nu }E_{\nu }^{\ast }( \overline{\lambda }),  \hspace{1cm}\widehat{c}_{\eta
\nu } \in \mathbb{Q}(q,t). \label{eopact2} 
\end{equation}
By assumption $E_{\eta
}^{\ast }( \overline{\lambda })\not=0$ and as before $( \overline{\lambda }_{i}\overline{\eta }_i^{-1}-1)\not=0$, therefore there exists a $E_\nu(\overline{\lambda})$ on the RHS of (\ref{eopact2}) that does not vanish and by Proposition \ref{plus1} we have $\nu\preceq'\lambda$. Since $\nu=c_I(\eta)$ implies $\eta\preceq'\nu$ we can use Corollary \ref{compostoinofprec} to show $\eta\preceq'\lambda$, which completes the proof for the case $|\lambda|=|\eta|+2$. Applying this procedure iteratively shows the result holds for general $\lambda$.
 \hfill $\square$ \end{proof}

We now move onto the major goal of the paper, deriving explicit formulas for the Pieri-type coefficients.

\section{Structure of the Pieri-type expansions for the nonsymmetric Macdonald polynomials}\label{structure}
Before focussing on the Pieri-type formulas for the nonsymmetric Macdonald polynomials we consider a more general situation.

Take a linear basis of polynomials $\{P_\eta(z);\, \eta\in \mathbb{N}^n\}$ of degree $|\eta|$, satisfying 
$$P_\eta(\overline{\mu})=0, \hspace{1cm} \text{ for all }|\mu|\leq|\eta|, \,\,\,\, \mu\not=\eta.
$$
The interpolation Macdonald polynomials are clearly such a basis, further examples are the Schubert polynomials \cite{lascouxschubert} and the interpolation Jack polynomials for appropriate meaning of $\overline{\mu}$. If we take a polynomial $f_r(z)$ of degree $r$ such that $f_r(\overline{\eta})=0$ we have
\begin{equation*}
f_r(z)P_\eta(z)=\sum_{|\eta|+1\leq| \lambda |\leq |\eta|+r}a_{\eta \lambda}P_ \lambda(z). \label{genvan}
\end{equation*}

We seek the most simplified expressions for the coefficients in the expansion in the case that $f_r(z)=(e_r(z)-e_r(\overline{\mu}))$ and $P_\eta(z)=E^*_\eta(z;q,t)$. Following the strategies of \cite{marshalljack} we then use the fact that the top homogeneous component of $E_\eta^*(z;q,t)$ is $E_\eta(z;q^{-1},t^{-1})$ to conclude that the coefficients $A_{\eta,\lambda}^{(r)}(q,t)$ in 

\begin{equation}
(e_r(z)-e_r(\overline{\eta}))E_{\eta }^{\ast }( z;q,t) =\sum_{|\eta|+1\leq| \lambda |\leq |\eta|+r} A_{\eta \lambda
}^{(r) }( q,t)  E_{\lambda }^{\ast }(
z;q,t).\label{basicint}
\end{equation}
are the same as those in (\ref{nonzero}).
The theories of both the nonsymmetric Macdonald polynomials and the interpolation polynomials are employed to simplify the coefficients $A_{\eta \lambda}^{(r)}$. 

We begin using the interpolation polynomials to restrict the summation in both (\ref{basicint}) and (\ref{nonzero}). The extra vanishing conditions of the interpolation polynomials state that $E_\eta^*(\overline{\lambda})=0$ if $\eta \not\preceq' \lambda $ allowing us to write
\begin{equation}
(e_r(z)-e_r(\overline{\eta}))E_{\eta }^{\ast }( z;q,t) =\sum_{\substack{|\eta|+1\leq| \lambda |\leq |\eta|+r\\ \eta\preceq'\lambda}} A_{\eta \lambda
}^{(r) }( q,t)  E_{\lambda }^{\ast }(
z;q,t)\label{basicint2}
\end{equation}
and
\begin{equation*}\label{nsbegin2}
e_r(z)E_{\eta }( z;q^{-1},t^{-1}) =\sum_{\substack{|\lambda |= |\eta|+r \\ \eta \preceq' \lambda} } A_{\eta \lambda
}^{(r) }( q,t)  E_{\lambda }(
z;q^{-1},t^{-1})
\end{equation*}
respectively. Following the methods of Forrester and McAnally \cite{forresterandmcanally} we exploit the orthogonality of the nonsymmetric Macdonald polynomials to identify further zero coefficients in the expansions.

\begin{proposition}
\label{relating coefficients}We have
\begin{equation}
A_{\eta \lambda }^{(r) }( q,t) =A_{\lambda ,\eta
+( 1^{n}) }^{(n-r )}( q^{-1},t^{-1}) 
\frac{\mathcal{N}_{\eta }}{\mathcal{N}_{\lambda }},
\label{coefficient relationship}
\end{equation}
where $\mathcal{N}_{\eta }$ is given by $( \ref{Nn})$ and $\eta +(k^{n}):=(\eta_1+k,\ldots, \eta_n+k)$.
\end{proposition}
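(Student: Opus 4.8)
The plan is to recover the coefficient $A_{\eta\lambda}^{(r)}$ directly from the Pieri expansion $(\ref{nonzero})$ by pairing it against a second nonsymmetric Macdonald polynomial and invoking the orthogonality $(\ref{orthogonality relation})$, after a change of variables $z\mapsto z^{-1}$ inside the constant-term functional $(\ref{inner product})$ that trades $e_r$ for $e_{n-r}$ and $E_\eta$ for $E_{\eta+(1^n)}$.

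Two elementary observations are required. First, $e_r(z^{-1})=(z_1\cdots z_n)^{-1}e_{n-r}(z)$, which is immediate on pairing each $r$-element subset of $\{1,\ldots,n\}$ with its complement. Second, $E_{\eta+(1^n)}(z;q,t)=z_1\cdots z_n\,E_\eta(z;q,t)$: multiplication by $z_1\cdots z_n$ is an order-preserving bijection on monomials, since $\mu\prec\eta$ if and only if $\mu+(1^n)\prec\eta+(1^n)$, so it sends the triangular normalisation $(\ref{forma})$ for $\eta$ to that for $\eta+(1^n)$, and it preserves $(\ref{inner product})$ because the factor $z_1\cdots z_n$ contributed by the first argument cancels the $(z_1\cdots z_n)^{-1}$ produced in the second; the uniqueness in $(\ref{forma})$ then forces the claimed identity, and the same cancellation yields $\mathcal{N}_{\eta+(1^n)}=\mathcal{N}_\eta$.

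With these in hand I would substitute $z\mapsto z^{-1}$ throughout $(\ref{nonzero})$ (renaming the summation index to $\mu$), multiply both sides by $E_\lambda(z;q,t)W(z)$, and take the constant term. On the right each surviving term is $A_{\eta\mu}^{(r)}(q,t)\,\mathrm{CT}[E_\mu(z^{-1};q^{-1},t^{-1})E_\lambda(z;q,t)W(z)]=A_{\eta\mu}^{(r)}(q,t)\langle E_\lambda,E_\mu\rangle_{q,t}$, so by $(\ref{orthogonality relation})$ the right-hand side collapses to $A_{\eta\lambda}^{(r)}(q,t)\mathcal{N}_\lambda$. On the left, the two observations give $e_r(z^{-1})E_\eta(z^{-1};q^{-1},t^{-1})=e_{n-r}(z)\,E_{\eta+(1^n)}(z^{-1};q^{-1},t^{-1})$; expanding $e_{n-r}(z)E_\lambda(z;q,t)=\sum_\nu A_{\lambda\nu}^{(n-r)}(q^{-1},t^{-1})E_\nu(z;q,t)$ via the form of $(\ref{nonzero})$ with $(q,t)$ and $(q^{-1},t^{-1})$ interchanged, and applying $(\ref{orthogonality relation})$ once more, the left-hand side becomes $A_{\lambda,\eta+(1^n)}^{(n-r)}(q^{-1},t^{-1})\mathcal{N}_{\eta+(1^n)}$. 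Equating the two sides, dividing by $\mathcal{N}_\lambda$, and using $\mathcal{N}_{\eta+(1^n)}=\mathcal{N}_\eta$ produces exactly $(\ref{coefficient relationship})$.

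I do not anticipate a genuine obstacle: the computation is short, with the only care needed being to track which polynomials carry the parameters $(q,t)$ and which carry $(q^{-1},t^{-1})$ as $z\mapsto z^{-1}$ is applied, so that each constant term encountered is correctly identified with the inner product $(\ref{inner product})$ to which $(\ref{orthogonality relation})$ applies. The substantive content is confined to the two elementary identities of the second paragraph, and both follow directly from the triangularity $(\ref{forma})$ and the defining form of $\langle\cdot,\cdot\rangle_{q,t}$.
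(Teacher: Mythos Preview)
Your proposal is correct and follows essentially the same route as the paper: extract the coefficient via the inner product $(\ref{inner product})$--$(\ref{orthogonality relation})$, pass $z\mapsto z^{-1}$ inside the constant term, trade $e_r$ for $e_{n-r}$ and $E_\eta$ for $E_{\eta+(1^n)}$ via the two elementary identities you record, and apply orthogonality a second time. The only cosmetic difference is the order of operations (you substitute in $(\ref{nonzero})$ before pairing, the paper pairs first and then substitutes inside $\mathrm{CT}$); you also supply explicit justification for $E_{\eta+(1^n)}(z)=z_1\cdots z_n\,E_\eta(z)$ and $\mathcal{N}_{\eta+(1^n)}=\mathcal{N}_\eta$, which the paper simply asserts.
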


\begin{proof}
With $\lambda $ such that $| \lambda | =| \eta
| +r$ and $\eta \preceq' \lambda $ by $( \ref{nonzero}) $
and the orthogonality and linearity properties of $\left\langle \cdot ,\cdot
\right\rangle _{q,t}$ $( \ref{orthogonality relation}) $ we have 
\begin{align}
&\left\langle e_r(z)E_{\eta }( z;q^{-1},t^{-1})
,E_{\lambda }( z;q^{-1},t^{-1}) \right\rangle _{q,t}   \notag \\
&\hspace{2cm}=A_{\eta\lambda }^{(r)}( q,t)
\left\langle E_{\lambda }( z;q^{-1},t^{-1}),E_{\lambda }(
z;q^{-1},t^{-1}) \right\rangle _{q,t}.  \label{first orth lemma}
\end{align}
Using $( \ref{inner product}) $ we can write the left hand side
of $( \ref{first orth lemma}) $ as 
\begin{equation*}
\text{CT}[  e_r(z)E_{\eta }( z;q^{-1},t^{-1})
E_{\lambda }( z^{-1};q,t) W( z) ] .
\end{equation*}
Replacing $z$ with $z^{-1}$ and then multiplying $e_r(z^{-1})$
by $( z_{1}\ldots z_{n}) $ and $E_{\eta }(
z^{-1};q^{-1},t^{-1}) $ by $( z_{1}\ldots z_{n}) ^{-1}$ gives 
\begin{equation*}
\text{CT}[ e_{n-r}(z) E_{\eta +( 1^{n}) }(
z^{-1};q^{-1},t^{-1}) E_{\lambda }( z;q,t) W(
z^{-1}) ] ,
\end{equation*}
This can be recognised as
\begin{equation*}
\left\langle  e_{n-r}(z) E_{\lambda }( z;q,t) ,E_{\eta
+( 1^{n}) }( z;q,t) \right\rangle _{q,t}.
\end{equation*}
Again, by linearity and orthogonality we have 
\begin{equation*}
A_{\lambda ,\eta +( 1^{n}) }^{(n-r ) }(
q^{-1},t^{-1}) \left\langle E_{\eta +( 1^{n}) }( z;q,t)
,E_{\eta +( 1^{n}) }( z;q,t) \right\rangle _{q,t}.
\end{equation*}
Since $\mathcal{N}_{\eta }( q,t) =\mathcal{N}_{\eta }(
q^{-1},t^{-1}) $ and $\mathcal{N}_{\eta }( q,t) =\mathcal{N}
_{\eta +( 1^{n}) }( q,t) $ it follows that $( \ref
{coefficient relationship}) $ is true.
\hfill $\square$ \end{proof}

\begin{corollary}\label{furthvan}
We have
\begin{equation*}\label{vanishcoeff}
A_{\eta \lambda }^{(r)}( q,t)=0 \hspace{0.25cm} \text{ if }\hspace{0.25cm}
\eta \not\preceq' \lambda \hspace{0.25cm} \text{or} \hspace{0.25cm} \lambda \not\preceq' \eta +( 1^{n}),
\end{equation*}
and therefore\begin{equation}
e_r(z)E_{\eta }( z;q^{-1},t^{-1}) =\sum_{\substack{|\lambda |= |\eta|+r \\ \eta \preceq' \lambda \preceq'\eta+(1^n)} }A_{\eta \lambda }^{(r)}(
q,t) E_{\lambda }( z;q^{-1},t^{-1}). \label{nonzerocoefficientspieri}
\end{equation}
\end{corollary}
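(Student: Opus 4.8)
The plan is to combine the extra vanishing theorem (Proposition \ref{extra vanishing condition}) with the symmetry relation of Proposition \ref{relating coefficients}; essentially all the work has already been done, and what remains is to assemble it.

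First I would record the first of the two vanishing conditions, which is already implicit in the derivation leading to (\ref{basicint2}): evaluating (\ref{basicint}) (equivalently, reading off (\ref{basicint2})) and using $E^*_\eta(\overline{\mu})=0$ whenever $\eta\not\preceq'\mu$ — which is Proposition \ref{extra vanishing condition} — shows immediately that $A_{\eta\lambda}^{(r)}(q,t)=0$ unless $\eta\preceq'\lambda$. Since the coefficients in (\ref{basicint}) coincide with those in (\ref{nonzero}), the same holds for the Pieri-type expansion of the nonsymmetric Macdonald polynomials.

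Next I would feed this into Proposition \ref{relating coefficients}, which gives $A_{\eta\lambda}^{(r)}(q,t)=A_{\lambda,\eta+(1^n)}^{(n-r)}(q^{-1},t^{-1})\,\mathcal{N}_\eta/\mathcal{N}_\lambda$. The factor $A_{\lambda,\eta+(1^n)}^{(n-r)}$ is itself a Pieri-type coefficient of exactly the form already treated, with $\eta$ replaced by $\lambda$, with $\lambda$ replaced by $\eta+(1^n)$, with $r$ replaced by $n-r$, and with $q,t$ replaced by their inverses. Applying the first vanishing condition to it shows $A_{\lambda,\eta+(1^n)}^{(n-r)}(q^{-1},t^{-1})=0$ unless $\lambda\preceq'\eta+(1^n)$. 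Because $\mathcal{N}_\eta/\mathcal{N}_\lambda$ is a nonzero rational function of $q,t$ (by the explicit formula (\ref{Nn})), this transmits the zero back and forces $A_{\eta\lambda}^{(r)}(q,t)=0$ unless $\lambda\preceq'\eta+(1^n)$ as well. The two conditions together give the displayed vanishing statement, and restricting the sum in (\ref{nonzero}) to those $\lambda$ with $|\lambda|=|\eta|+r$ and $\eta\preceq'\lambda\preceq'\eta+(1^n)$ yields (\ref{nonzerocoefficientspieri}).

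The argument is almost pure bookkeeping, so there is no serious obstacle; the only points that need a moment's care are, first, checking that Proposition \ref{extra vanishing condition} (and hence the first half of this corollary) was proved for arbitrary compositions and arbitrary degree, so that applying it with $r$ replaced by $n-r$ introduces no circularity, and second, confirming that $\mathcal{N}_\eta$ and $\mathcal{N}_\lambda$ are nonzero as rational functions so that the relation of Proposition \ref{relating coefficients} really does propagate the vanishing in both directions.
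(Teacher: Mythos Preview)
Your proposal is correct and follows exactly the approach the paper intends: the first vanishing condition is read off from (\ref{basicint2}) via Proposition \ref{extra vanishing condition}, and the second is obtained by applying that same condition to the right-hand side of Proposition \ref{relating coefficients}. The paper treats the corollary as immediate and gives no further argument, so your added remarks about circularity and the nonvanishing of $\mathcal{N}_\eta/\mathcal{N}_\lambda$ are simply careful bookkeeping beyond what the paper spells out.
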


In \cite{forresterandmcanally} Forrester and McAnally gave further structure to the $\lambda$ in (\ref{nonzerocoefficientspieri}), showing that compositions $\lambda$ satisfying  $\eta \preceq' \lambda \preceq'\eta+(1^n)$ are characterised by the properties that there are sets $\{i_1,\ldots,i_r  \}\subset\{1,\ldots,n\}$ and $\{j_1,\ldots,j_{n-r}  \}=\{1,\ldots,n\}\setminus\{1,\ldots,n\} $ such that
\begin{align*} \label{specialcomposition}
\lambda_{\sigma(i_p)}&=\eta_{i_p}+1\hspace{1cm}\text{for }i_p\leq \sigma(i_p)\hspace{1cm}p=1,\ldots,r \notag \\
\lambda_{\sigma(j_p)}&=\eta_{j_p}\hspace{1cm}\text{for }j_p\leq \sigma(j_p)\hspace{1cm}j=1,\ldots,n-r
\end{align*} 
for some defining permutation $\sigma$.


\section{An alternative derivation of the Pieri-type formulas for $r=1$}\label{pieri1}

The methods used to determine the general Pieri-type coefficients are motivated by those of Lascoux \cite{lascoux}, and in particular are quite different from those used in \cite{wendyb}. Lascoux computed the Pieri-type coefficients for the case $r=1$ by observing that 
\begin{equation*}
(z_1+\ldots + z_n-|\overline{\eta}|)E_\eta^*(z)=\sum_\lambda c_\eta^\lambda E_\lambda^*(z)
\end{equation*} 
could be evaluated at $\overline{\lambda}$ to give
\begin{equation*}
c_\eta^\lambda=\frac{(|\overline{\lambda}|-|\overline{\eta}|)E_\eta^*(\overline{\lambda})}{E_\lambda^*(\overline{\lambda})}.
\end{equation*}
Then an explicit formula for 
\begin{equation} \label{ratioofevaluatedmacs}
\frac{E_\eta^*(\overline{\lambda})}{E_\lambda^*(\overline{\lambda})}
\end{equation}
was found using an inductive proof stemming from the recursive generation of $E_\lambda^*$ from $E_\eta^*$. 
We first present an alternative derivation of the evaluation of (\ref{ratioofevaluatedmacs}), where in contrast to Lascoux's inductive proof we use the recursive generation of the $\lambda$ from $\eta$ specified by Proposition \ref{successors by permutations} and also the eigenoperator properties of the $E^*_\eta$. This strategy will further be used to give a derivation of the general Pieri-type coefficients.

We begin by giving a more explicit description of compositions in the summation of (\ref{nonzerocoefficientspieri}) for the case $r=1$. Let $I=\{ t_{1},\ldots,t_{s}\} $ with $1\leq
t_{1}<\ldots<t_{s}\leq n$ and $I\neq \emptyset .$ The set $I$ is said to be maximal with
respect to $\eta $ iff
\begin{equation*}
\begin{tabular}{lll}
$( 1) $ & $\eta _{j}\neq \eta _{t_{u}},$ & $\text{ }
j=t_{u-1}+1,\ldots,t_{u}-1$ $( u=1,\ldots,s;\text{ }t_{0}:=0) ;$ \\ 
$( 2) $ & $\eta _{j}\neq \eta _{t_{1}}+1,$ & $\text{ }
j=t_{s}+1,\ldots,n.$
\end{tabular}
\end{equation*}
For example with $\eta=(1,1,1)$ the sets $\{1\}$, $\{1,2\}$ and $\{ 1,2,3\}$ are maximal and all other subsets of $\{ 1,2,3\}$ are not. Knop \cite{knop} showed that the compositions in the summation of (\ref{nonzerocoefficientspieri}) with $r=1$ can be specified as
$$
\{ \lambda: \lambda=c_I(\eta),\hspace{0.1cm} I \hspace{0.1cm}{\rm maximal} \}.
$$

With this in place we are now ready to give the first step in determining an explicit formula for (\ref{ratioofevaluatedmacs}).

\begin{proposition}\label{ratio}
With $I=\{ t_{1},\ldots,t_{s}\} $ maximal with respect to $\eta $
and $\lambda =c_{I}( \eta ) $
\begin{equation}
H_{t_{1}}\ldots H_{n}\Phi H_{1}\ldots H_{t_{1}-1}E_{\eta }^{\ast }( \overline{
\lambda }) =\frac{q^{-\eta _{t_{1}}}\delta ( \eta ,I) \beta
( \eta ,I) (\overline{\lambda }_{t_{1}}\overline{\eta }
_{t_{1}}^{-1}-1)}{(t-1)}E_{\eta }^{\ast }( \overline{\lambda }) ,
\label{required formula}
\end{equation}
where 
\begin{equation*}
\delta ( \eta ,I) :=\prod_{u=1}^{s}\frac{t-1}{1-\overline{
\lambda }_{t_{u}}\overline{\eta }_{t_{u}}^{-1}}  \label{delta new}
\end{equation*}
and 
\begin{equation}
\beta ( \eta ,I) :=\prod_{i}\frac{( X( i)
-t) ( tX( i) -1) }{( X( i)
-1) ^{2}},  \label{beta new}
\end{equation}
with $X( i) :=$ $\overline{\eta }_{t_{u}( i) }
\overline{\eta }_{i}^{-1},$ where $t_{u}( i) $ is the first
element in $I$ above $i$ and if $i>t_{s}$ then $\eta _{t_{u}( i)
}=\eta _{t_{1}}+1.$ The product $( \ref{beta new}) $ is over all $
i\not\in I$ with $\eta _{i}>\eta _{t_{u}( i) }.$
\end{proposition}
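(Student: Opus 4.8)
The plan is to evaluate the left-hand side of (\ref{required formula}) directly, applying the operators to $E_\eta^\ast$ one factor at a time from right to left with the recursive rules (\ref{hecke on interpolation}) and (\ref{raising on interpolation}), and discarding after each step --- by the extra vanishing theorem (Proposition \ref{extra vanishing condition}) --- every summand $E_\nu^\ast$ with $\nu\not\preceq'\lambda$, since such a term vanishes at $\overline{\lambda}$. The starting observation is that the operator word in (\ref{required formula}) is the Hecke-operator lift of the composition word $\sigma_{t_1+1}\cdots\sigma_n\,\Phi\,s_1\cdots s_{t_1-1}$ of Proposition \ref{successors by permutations} that builds $\lambda=c_I(\eta)$ from $\eta$: each transposition $s_j$ is replaced by $H_j$, and each slot where that word carries the identity (the indices lying in $I$) is now occupied by an ``extra'' factor $H_{t_u-1}$, $u=2,\dots,s$. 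Since $\Phi$ raises the degree by one and the $H_j$ preserve it, every summand produced has modulus $|\eta|+1=|\lambda|$, and a composition $\nu$ of that modulus with $\nu\preceq'\lambda$ must equal $\lambda$; hence only the single term $E_{c_I(\eta)}^\ast(\overline{\lambda})=E_\lambda^\ast(\overline{\lambda})$ survives the evaluation, and everything reduces to computing the scalar in front of it.

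That scalar is the product of the coefficients met along the ``on-track'' path. Pushing $H_1\cdots H_{t_1-1}$ through $E_\eta^\ast$ carries $\eta_{t_1}$ into the first coordinate; the step moving it past $\eta_i$ ($i<t_1$) uses the $\mu_i>\mu_{i+1}$ case of (\ref{hecke on interpolation}) exactly when $\eta_i>\eta_{t_1}$, contributing then an off-diagonal coefficient of the $\frac{(1-t\delta)(t-\delta)}{(1-\delta)^{2}}$ shape (and simply the coefficient $1$ when $\eta_i<\eta_{t_1}$), while maximality (condition (1)) rules out the degenerate equality $\eta_i=\eta_{t_1}$ that would instead force $H_iE_\mu^\ast=tE_\mu^\ast$. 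Then $\Phi$ acts by (\ref{raising on interpolation}), and since position $1$ now holds $\eta_{t_1}$ it contributes the prefactor $q^{-\eta_{t_1}}$. The remaining factors $H_{n-1},\dots,H_{t_1}$ are pushed through in the same fashion: each $H_j$ coming from a genuine transposition contributes a further $\frac{(1-t\delta)(t-\delta)}{(1-\delta)^{2}}$-type factor whenever the two entries it compares stand in the relation $>$ (maximality conditions (1) and (2) again rule out equality), whereas each ``extra'' $H_{t_u-1}$ contributes only the diagonal coefficient $\frac{t-1}{1-\delta_{t_u-1}^{-1}}$ of (\ref{hecke on interpolation}), its off-diagonal partner being one of the terms killed at $\overline{\lambda}$.

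It remains to recognise each local ratio $\delta$ in terms of the final data. Using the explicit eigenvalues (\ref{eigenvalue}) and checking that the leg colength $l'$ of the entry being transported is left unchanged by every move that occurs --- again a consequence of maximality --- one identifies the $\delta$ at the step comparing the entry originally in slot $i$ with the one destined for slot $t_u(i)$ as $X(i)^{-1}=\overline{\eta}_i/\overline{\eta}_{t_u(i)}$, so that the product of the off-diagonal factors collapses to $\beta(\eta,I)$ of (\ref{beta new}); and the $\delta$ at the ``extra'' step $H_{t_u-1}$ as $\overline{\eta}_{t_u}/\overline{\lambda}_{t_u}$, so that the product of the diagonal factors collapses to $\prod_{u=2}^{s}\frac{t-1}{1-\overline{\lambda}_{t_u}\overline{\eta}_{t_u}^{-1}}$. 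Gathering $q^{-\eta_{t_1}}$ with these two products, and rewriting the answer in the uniform shape of (\ref{required formula}) by pairing the remaining $u=1$ factor of $\delta(\eta,I)$ with the eigenvalue factor $(\overline{\lambda}_{t_1}\overline{\eta}_{t_1}^{-1}-1)/(t-1)$ --- a repackaging which is natural once one notes that the full operator word is the eigenoperator $z_{t_1}\Xi_{t_1}-1$ of (\ref{XiI}), whose action on $E_\eta^\ast$ is governed by (\ref{eigenfunction}) (cf.\ the proof of Proposition \ref{plus1}), and which is precisely what will later turn this relation into the evaluation of the ratio (\ref{ratioofevaluatedmacs}) --- yields the stated identity.

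The step I expect to be the genuine obstacle is the combinatorial bookkeeping of the middle two paragraphs: confirming that along the \emph{entire} word the comparison $\mu_j$ versus $\mu_{j+1}$ never degenerates and always falls in the case asserted, and that the transported entries keep their leg colengths, so that each $\delta_{j,\mu}$ can be read off as a ratio of $\overline{\eta}$- and $\overline{\lambda}$-eigenvalues. This is purely combinatorial, but it is exactly where maximality of $I$ (conditions (1) and (2)) is indispensable, and where overlooking a degenerate case ($\mu_j=\mu_{j+1}$) would wreck the identity; a convenient way to organise it is induction on $s=|I|$, peeling off one ``extra'' factor at a time and re-invoking Proposition \ref{successors by permutations} for the shortened word.
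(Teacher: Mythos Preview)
Your approach is essentially the paper's: expand the operator word on $E_\eta^\ast$ via the recursive rules (\ref{hecke on interpolation}) and (\ref{raising on interpolation}), note that every term in the full expansion has modulus $|\eta|+1=|\lambda|$ so that upon evaluation at $\overline{\lambda}$ only $E_\lambda^\ast(\overline{\lambda})$ survives, and read off its coefficient by following the unique ``on-track'' path of Proposition \ref{successors by permutations}. The case analysis you outline (the $H_1\cdots H_{t_1-1}$ block, the $\Phi$ factor $q^{-\eta_{t_1}}$, the diagonal contributions $\frac{t-1}{1-\overline{\lambda}_{t_u}\overline{\eta}_{t_u}^{-1}}$ at the ``extra'' positions $t_u\in I$, and the $\beta$-type factors at the genuine switches) matches the paper's proof exactly.

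Two small points of exposition. First, your phrase ``discarding after each step'' is not literally correct: at an intermediate stage after $\Phi$ the on-track composition $\mu$ already has $|\mu|=|\lambda|$ but $\mu\neq\lambda$, hence $\mu\not\preceq'\lambda$, so your stated criterion would throw away the very branch you need. What actually works, and what you in fact do in the next sentence, is to evaluate only after the \emph{full} expansion; and for that the defining vanishing property (Definition~3) already suffices, so invoking the extra vanishing theorem is harmless but unnecessary. Second, the paper explicitly records --- and you tacitly use --- that $E_\lambda^\ast$ appears \emph{exactly once} in the expansion, so that the coefficient really is a single product along the on-track path rather than a sum over several paths; this is worth stating, since it is what justifies reading the scalar off from one branch.
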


\begin{proof}
With $I$ maximal with respect to $\eta$, Proposition \ref{successors by permutations} can be used to show the polynomial $E^*_\lambda$ occurs exactly once in the expansion of \begin{equation}
H_{t_{1}}\ldots H_{n}\Phi H_{1}\ldots H_{t_{1}-1}E_{\eta }^{\ast }( z).
\label{expansion}
\end{equation}
Since all polynomials $E_{\nu }^{\ast }( z) $
appearing in the full expansion of $( \ref{expansion}) $ are of
size $| \eta | +1$, by the vanishing conditions of $
E^{\ast }( z) $, evaluating $( \ref{expansion}) $ at $
\overline{\lambda }$ will reduce it to some
multiple of $E_{\lambda }^{\ast }( \overline{\lambda }) .$ 
 
We begin by expanding $( \ref{expansion}) $ from the right using
the recursive generation formulae to determine the coefficient of each $
E_{\mu ( j) }^{\ast }( z) ,$ where $\mu (
j) $ represents the transformed $\eta $ after the $j^{th}$ step in
the transformation from $\eta $ to $\lambda $. Since the operators $H_{i}$
and $\Phi $ commute through constants we consider the coefficient
contribution of each operator on the $E_{\mu ( i) }^{\ast }(
z) $ to observe the result more easily. First consider the expansion
of 
\begin{equation*}
H_{1}\ldots H_{t_{1}-1}E_{\eta }^{\ast }( z) .
\end{equation*}
We know from Proposition \ref{successors by permutations} that we require
the $s_{i}$ to act on $\eta $ at every stage to move $\eta _{t_{1}}$ to the
first position before acting upon by $\Phi .$ Therefore we must take the
coefficient of $E_{s_{i}\mu ( t_{1}-i) }^{\ast }( z) $
when $H_{i}$ acts on each $E_{\mu ( t_{1}-i) }^{\ast }(
z) .$ At each stage the switching operator swaps $\eta _{j}$ with $
\eta _{t_{1}}$, where $j$ runs from $t_{1}-1$ to $1.$ By $( \ref{hecke
on interpolation}) $ the coefficient contribution will be $1$ if $\eta
_{j}<\eta _{t_{1}}$, and 
\begin{equation*}
\frac{( X( j) -t) ( tX( j) -1) }{
( X( j) -1) ^{2}}
\end{equation*}
where $X( j) =\overline{\eta }_{t_{1}}\overline{\eta }_{j}^{-1}$
if $\eta _{j}>\eta _{t_{1}}.$ Multiplying these terms together gives the
coefficient of $E_{\mu ( t_{1}-1) }^{\ast }( z) $
where 
\begin{equation*}
\mu ( t_{1}-1) =( \eta _{t_{1}},\eta _{1},\ldots,\eta
_{t_{1}-1},\eta _{t_{1}+1},\ldots.,\eta _{n}) .
\end{equation*}

Next we act upon $H_{1}\ldots H_{t_{1}-1}E_{\eta }^{\ast }( z) $ with 
$\Phi .$ By $( \ref{raising on interpolation}) $ when $\Phi $
acts on $E_{\mu ( t_{1}-1) }^{\ast }( z) $ the
coefficient contribution is $q^{-\eta _{t_{1}}}$ and the new polynomial is $
E_{\mu ( t_{1}) }^{\ast }( z) $ where 
\begin{equation*}
\mu ( t_{1}) =( \eta _{1},\ldots,\eta _{t_{1}-1},\eta
_{t_{1}+1},\ldots.,\eta _{n},\eta _{t_{1}}+1) .
\end{equation*}
We proceed by considering the coefficients of $E_{\mu ( j)
}^{\ast }$ for $j>t_{1}$ in the expansion of 
\begin{equation*}
H_{t_{1}}\ldots H_{n-1}E_{\mu
( t_{1}) }^{\ast }( z) .
\end{equation*}
At this stage particular attention must be payed to the set $I$ to know whether we want to extract the
coefficient and polynomial of $E_{\mu ( t_{1}+j) }^{\ast }(
z) $ or $E_{s_{n-j}\mu ( t_{1}+j) }^{\ast }( z) $
from the action of $H_{n-j}$ on $E_{\mu ( t_{1}+j) }^{\ast
}( z) .$ First consider the action of $H_{k-1}$ for $k\in I,$ $
k>t_{1}.$ From Proposition \ref{successors by permutations} we know that if $
k\in I$ we don't require the switch $s_{k-1}$ in the generation of $\lambda $. Therefore, when $H_{k-1}$ acts on $E_{\mu ( t_{1}+n-k+1) }^{\ast }$,
we take the coefficient of $E_{\mu ( t_{1}+n-k+1) }^{\ast }.$ By $
( \ref{hecke on interpolation}) $ this is given by 
\begin{equation*}
\frac{t-1}{1-\delta _{\mu ( t_{1}+n-k+1) ,k-1}^{-1}}.
\end{equation*}
To determine the value of $\delta _{\mu ( t_{1}+n-k+1) ,k}^{-1}$
we consider the $( k-1) ^{th}$ and $k^{th}$ value of $\mu (
t_{1}+n-k+1) .$ Since we do not need to swap the components we must
have $\mu ( t_{1}+n-k+1) _{k}=\lambda _{k}.$ Also, at this stage $
\mu ( t_{1}+n-k+1) _{k-1}$ is equal to $\eta _{k}$ since the $
( k-1) ^{th}$ component hasn't changed since $\mu (
t_{1}) .$ Hence the coefficient of the polynomial $E_{\mu (
t_{1}+n-k+1) }^{\ast }$ in $H_{k-1}E_{\mu ( t_{1}+n-k+1)
}^{\ast }$ with $k\in I$ is 
\begin{equation}
\frac{t-1}{1-\overline{\lambda }_{k}\overline{\eta }_{k}^{-1}}.
\label{new delta part}
\end{equation}
It is important to note here that $\overline{\lambda }_{k}\not=\overline{
\eta }_{k}$ for $k\in I$ since even if $\lambda _{k}=\eta _{k}$ we have $
\overline{\lambda }_{k}=\overline{\eta }_{k}/t$. The total contribution of
these terms is the product of $( \ref{new delta part}) $ as $k$
runs from $2$ to $s.$ 

Lastly we consider the case where $k\not\in I,$
where we take the coefficient of the $E_{s_{k-1}\mu (
t_{1}+n-k+1) }^{\ast }$ in the expansion of $H_{k-1}E_{\mu (
t_{1}+n-k+1) }^{\ast }.$ For $k>t_{s}$ we use $
s_{k-1}$ to move $\eta _{t_{1}}+1$ to the $t_{s}^{th}$ position,
each time swapping $\eta _{t_{1}}+1$ with $\eta _{k}.$ For $t_{1}<k<t_{s}$
each $s_{k-1}$ is used to move $\eta _{t_{u}}$ to the $
t_{u-1}^{th}$ position, swapping $\eta _{k}$ and $\eta
_{t_{u}( k) },$ where $t_{u}( k) $ is the first
element of $I$ above $k.$ By (\ref{hecke on interpolation}) when either $
\eta _{k}>\eta _{t_{1}}+1$, for $j=t_{s}+1,\ldots,n$ or $\eta _{k}>\eta
_{t_{u}( k) },$ for $t_{1}<k<t_{s}$ we have the coefficient 
\begin{equation*}
\frac{( X( k) -t) ( tX( k) -1) }{
( X( k) -1) ^{2}}
\end{equation*}
where $X( i) =\overline{\eta }_{t_{u}( k) }\overline{
\eta }_{k}^{-1}$. Combining all coefficients gives
$( \ref{required formula}) $. 
\hfill $\square$ \end{proof}

\begin{proposition} \label{interpolationratioprop}
With $I=\{ t_{1},\ldots,t_{s}\} $ maximal with respect to $\eta$ 
and $\lambda
=c_{I}( \eta ) $  
\begin{equation}
\frac{E_{\eta }^{\ast }( \overline{\lambda }) }{E_{\lambda
}^{\ast }( \overline{\lambda }) }=\frac{q^{-\eta _{t_{1}}}\delta
( \eta ,I) \beta ( \eta ,I) }{( 1-t) }.
\label{final ratio}
\end{equation}
\end{proposition}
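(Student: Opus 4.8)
The plan is to read off (\ref{final ratio}) by pairing Proposition~\ref{ratio} with the eigenoperator characterisation of $E_\eta^*$. As in the proof of Proposition~\ref{plus1}, I would rewrite $\Xi_{t_1}$ via (\ref{XiI}) so that $z_{t_1}\Xi_{t_1}-1$ is exactly the operator standing on the left of (\ref{required formula}). Applying the eigenfunction relation (\ref{eigenfunction}), $(z_{t_1}\Xi_{t_1}-1)E_\eta^*=(z_{t_1}\overline{\eta}_{t_1}^{-1}-1)E_\eta^*$, whose value at $z=\overline{\lambda}$ is $(\overline{\lambda}_{t_1}\overline{\eta}_{t_1}^{-1}-1)E_\eta^*(\overline{\lambda})$; this is precisely the left side of (\ref{eopact}) with $i=t_1$.

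Next I would use Proposition~\ref{ratio} to evaluate the same quantity a second way. By Proposition~\ref{successors by permutations} every interpolation polynomial in the expansion of the operator on the left of (\ref{required formula}) applied to $E_\eta^*$ is an $E^*_{c_J(\eta)}$, and since $\Phi$ raises the modulus by one while the $H_i$ preserve it, all of these have modulus $|\eta|+1=|\lambda|$; hence each vanishes at $\overline{\lambda}$ by the defining vanishing property, except the term indexed by $\lambda=c_I(\eta)$, which is present with multiplicity one precisely because $I$ is maximal with respect to $\eta$ (the opening observation in the proof of Proposition~\ref{ratio}). Thus evaluation at $\overline{\lambda}$ collapses the expansion to the single explicit scalar $q^{-\eta_{t_1}}\delta(\eta,I)\beta(\eta,I)(\overline{\lambda}_{t_1}\overline{\eta}_{t_1}^{-1}-1)/(t-1)$ times $E_\lambda^*(\overline{\lambda})$, as supplied by Proposition~\ref{ratio}.

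It then remains to equate the two evaluations and simplify. One cancels the common factor $(\overline{\lambda}_{t_1}\overline{\eta}_{t_1}^{-1}-1)$, which is legitimate because $t_1\in I$ forces $\overline{\lambda}_{t_1}\overline{\eta}_{t_1}^{-1}\neq1$ — if $\lambda_{t_1}=\eta_{t_1}$ this ratio is a nontrivial power of $t$ arising from the change in leg colength, the same remark used in the proof of Proposition~\ref{extra vanishing condition} — and one divides by $E_\lambda^*(\overline{\lambda})$, which is nonzero by the last defining property of the interpolation polynomials applied to $\lambda$. Rearranging the surviving constants yields (\ref{final ratio}).

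Given Proposition~\ref{ratio}, I expect no real obstruction: this is essentially a two-line deduction, with all of the genuine work — the step-by-step bookkeeping over the successive Hecke operators of which branch of (\ref{hecke on interpolation}) is selected (according to membership in $I$ and the relative sizes of the entries being transposed), together with the $q^{-\eta_{t_1}}$ from (\ref{raising on interpolation}), and the verification that these local contributions reassemble into $\delta(\eta,I)\beta(\eta,I)$ — already carried out inside Proposition~\ref{ratio}. The one point to watch in the present argument is that the $(\overline{\lambda}_{t_1}\overline{\eta}_{t_1}^{-1}-1)$ factor inherited from (\ref{required formula}) cancels with the correct sign, leaving $(1-t)$, and not $(t-1)$, in the denominator of (\ref{final ratio}).
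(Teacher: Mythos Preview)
Your proposal is correct and follows exactly the same route as the paper's proof: rewrite $z_{t_1}\Xi_{t_1}-1$ as $H_{t_1}\cdots H_{n-1}\Phi H_1\cdots H_{t_1-1}$ via (\ref{XiI}) and (\ref{eigenfunction}), evaluate both sides at $\overline{\lambda}$, and invoke Proposition~\ref{ratio} together with the vanishing properties so that only the $E_\lambda^*(\overline{\lambda})$ term survives. The paper compresses this into two sentences; your additional remarks on the nonvanishing of the cancelled factor and of $E_\lambda^*(\overline{\lambda})$ are sound and make explicit what the paper leaves implicit.
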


\begin{proof}
By $( \ref{XiI}) $ and $( \ref{eigenfunction}) $ we
have
\begin{equation}
( z_{t_{1}}\overline{\eta }_{t_{1}}^{-1}-1) E_{\eta }^{\ast
}( z) =H_{t_{1}}\ldots H_{n-1}\Phi H_{1}\ldots H_{t_{1}-1}E_{\eta }^{\ast
}( z) .  \label{nearly done}
\end{equation}
By the previous proposition and the vanishing properties of the $E^*$, evaluating  $( \ref{nearly done}
) $ at $\overline{\lambda }$ gives (\ref{final ratio}).
\hfill $\square$ \end{proof}

\begin{corollary}\label{pierireq1}
We have 
\begin{equation*}
e_1(z) E_{\eta }( z;q^{-1},t^{-1})
=\sum_{\lambda :\lambda =c_{I}( \eta ) }\frac{(
| \overline{\lambda }| -| \overline{\eta }
| ) q^{-\eta _{t_{1}}}\delta ( \eta ,I) \beta
( \eta ,I) }{( 1-t) }E_{\lambda }(
z;q^{-1},t^{-1}) .  \label{alternate r=1}
\end{equation*}
\end{corollary}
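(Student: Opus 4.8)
The plan is to combine the Lascoux-style evaluation trick with the ratio formula of Proposition~\ref{interpolationratioprop}. First I would specialise the expansion (\ref{nonzerocoefficientspieri}) (equivalently the $r=1$ case of (\ref{basicint2})) and recall two facts already established: that $e_1(\overline{\eta})=|\overline{\eta}|$, so the left side reads $(z_1+\cdots+z_n-|\overline{\eta}|)E^*_\eta(z)$ exactly as in Lascoux's computation; and that, as recorded in Section~\ref{structure}, the coefficients $A^{(1)}_{\eta\lambda}(q,t)$ appearing in the interpolation identity (\ref{basicint}) coincide with those in (\ref{nonzero}). Hence it suffices to identify these coefficients explicitly, and by Knop's characterisation quoted above the relevant $\lambda$ are precisely the $c_I(\eta)$ with $I$ maximal with respect to $\eta$.

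Next I would evaluate the $r=1$ case of (\ref{basicint2}) at the point $z=\overline{\lambda}$, for a fixed $\lambda=c_I(\eta)$. On the left-hand side this produces $\bigl(|\overline{\lambda}|-|\overline{\eta}|\bigr)E^*_\eta(\overline{\lambda})$. On the right-hand side every interpolation polynomial $E^*_\mu$ occurring has $|\mu|=|\eta|+1=|\lambda|$, so by the defining vanishing conditions of the $E^*$ (namely $E^*_\mu(\overline{\lambda})=0$ whenever $|\mu|\le|\lambda|$ and $\mu\neq\lambda$) only the $\mu=\lambda$ term survives, leaving $A^{(1)}_{\eta\lambda}(q,t)\,E^*_\lambda(\overline{\lambda})$. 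Since $E^*_\lambda(\overline{\lambda})\neq0$, this yields
\[
A^{(1)}_{\eta\lambda}(q,t)=\bigl(|\overline{\lambda}|-|\overline{\eta}|\bigr)\,\frac{E^*_\eta(\overline{\lambda})}{E^*_\lambda(\overline{\lambda})}.
\]

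Finally I would substitute the evaluation of $E^*_\eta(\overline{\lambda})/E^*_\lambda(\overline{\lambda})$ given by Proposition~\ref{interpolationratioprop}, namely $q^{-\eta_{t_1}}\delta(\eta,I)\beta(\eta,I)/(1-t)$, into the expression for $A^{(1)}_{\eta\lambda}(q,t)$, and then feed this back into (\ref{nonzero}) with $r=1$, summing over $\lambda=c_I(\eta)$ with $I$ maximal. This gives the displayed formula. I do not expect any serious obstacle: the corollary is essentially a bookkeeping step assembling Proposition~\ref{interpolationratioprop}, the identification of the interpolation and homogeneous Pieri coefficients, and Knop's description of the index set. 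The one point deserving a moment's care is that the collapse of the right-hand sum to a single term after evaluation at $\overline{\lambda}$ rests only on the \emph{defining} vanishing conditions — all moduli in the sum being equal — rather than on the extra vanishing theorem, which instead entered earlier through the restriction on the summation index set.
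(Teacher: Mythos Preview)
Your proposal is correct and follows essentially the same route as the paper: the corollary is obtained by combining the Lascoux evaluation trick (explicitly recalled at the opening of Section~\ref{pieri1}) with the ratio formula of Proposition~\ref{interpolationratioprop}, together with the identification of interpolation and homogeneous Pieri coefficients from Section~\ref{structure} and Knop's description of the index set. Your observation that only the defining vanishing conditions are needed for the collapse at $z=\overline{\lambda}$ is a nice clarification, though the paper does not make this point explicitly.
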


It is straightforward to check that this formula and that obtained in Proposition 8 of \cite{wendyb} are equivalent.  We now show how this procedure can be extended to determine the general Pieri-type coefficients.

\section{The General Pieri-Type Formula Coefficients}\label{pierir}

To determine explicit formulas for the $A_{\eta \lambda}^{(r)}(q,t)$ in (\ref{nonzerocoefficientspieri}) we once again return to the theory of the interpolation polynomials. We begin by rewriting (\ref{basicint2}) as 

\begin{equation}
e_r(z)E^*_{\eta }( z;q,t) =\sum_{i=1}^r\sum_{\eta \preceq' \lambda^i }A_{\eta \lambda^i }^{(r)}(
q,t) E^*_{\lambda^i }( z;q,t) \label{nonzerocoefficientspieriint2},
\end{equation}
where we've introduced the notation $\lambda^i$ to denote a composition of modulus $|\eta|+i$.
 
Since the sum in (\ref{nonzerocoefficientspieriint2}) is over compositions of varying modulus we cannot just evaluate at each $\overline{\lambda}$ to obtain the coefficient of $E_\lambda^*(z)$ like we did in the proof of Proposition \ref{ratio}. Here, the coefficients must be generated recursively beginning with $\lambda$ such that $|\lambda|=|\eta|+1$. The details are provided in the following result.

\begin{theorem} \label{generalpiericoefficients}
For $\eta \preceq' \lambda^i$ the
coefficients $A_{\eta \lambda^i}^{(r)}$ in (\ref{nonzerocoefficientspieriint2}) 
are recursively generated as
\begin{align}
A_{\eta \lambda^1}^{(r)}&=\frac{(e_r(\overline{\lambda^1})-e_r(\overline{\eta}))E_\eta^*(\overline{\lambda^1})}{E_ {\lambda^1} ^*(\overline{\lambda^1})} \label{firstcoefficientgeneralpieri}\\
A_{\eta \lambda^2}^{(r)}&=\frac{(e_r(\overline{\lambda^2})-e_r(\overline{\eta}))E_\eta^*(\overline{\lambda^2})}{E_{\lambda^2}^*(\overline{\lambda^2})}-\sum_{\lambda^1: \eta \preceq' \lambda^1 \preceq' \lambda^2}A_{\eta \lambda^1}^{(r)}\frac{E^*_{\lambda^1}(\overline{\lambda^2})}{E^*_{\lambda^2}(\overline{\lambda^2})}  \label{2ndcoefficientgeneralpieri}
\end{align}
and in general
\begin{equation} \label{generalcoefficientgeneralpieri}
A_{\eta \lambda^i}^{(r)}=\frac{(e_r(\overline{\lambda^i})-e_r(\overline{\eta}))E_\eta^*(\overline{\lambda^i})}{E_{\lambda^i}^*(\overline{\lambda^i})}-\sum_{k=1}^{i-1}\sum_{\lambda^k: \eta \preceq' \lambda^k \preceq' \lambda^i}A_{\eta \lambda^k}^{(r)}\frac{E^*_{\lambda^k}(\overline{\lambda^i})}{E_{\lambda^i}^*(\overline{\lambda^i})}.
\end{equation}
If $\eta \not\preceq' \lambda^i$ we have $A_{\eta \lambda^i}^{(r)}=0.$
\end{theorem}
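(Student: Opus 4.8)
The plan is to extract the coefficients one modulus-layer at a time from the identity (\ref{basicint2}) by evaluating at the points $\overline{\lambda^i}$ and exploiting the vanishing properties of the interpolation polynomials. First I would recall that by Corollary~\ref{furthvan} the expansion (\ref{basicint2}) may be written as a sum over $\lambda$ with $\eta\preceq'\lambda$, so it suffices to determine $A_{\eta\lambda^i}^{(r)}$ for such $\lambda^i$ and to note that all other coefficients vanish. Fix $\lambda^i$ with $\eta\preceq'\lambda^i$ and substitute $z=\overline{\lambda^i}$ into (\ref{basicint2}). On the left the factor $e_r(\overline{\lambda^i})-e_r(\overline{\eta})$ multiplies $E_\eta^*(\overline{\lambda^i})$. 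On the right, every term $A_{\eta\mu}^{(r)}E_\mu^*(\overline{\lambda^i})$ with $|\mu|>|\lambda^i|$ contributes nothing, because $E_\mu^*$ has degree $|\mu|$ and vanishes at $\overline\nu$ for every $\nu$ with $|\nu|\le|\mu|$, $\nu\ne\mu$ — in particular at $\overline{\lambda^i}$. The terms with $|\mu|=|\lambda^i|$ reduce, by the same vanishing property, to the single term $\mu=\lambda^i$. The remaining terms are those with $|\mu|<|\lambda^i|$, i.e. $\mu=\lambda^k$ for $k=1,\ldots,i-1$, and among these only the ones with $\lambda^k\preceq'\lambda^i$ survive, since $E_{\lambda^k}^*(\overline{\lambda^i})=0$ whenever $\lambda^k\not\preceq'\lambda^i$ by Proposition~\ref{extra vanishing condition}. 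This yields
\begin{equation*}
(e_r(\overline{\lambda^i})-e_r(\overline{\eta}))E_\eta^*(\overline{\lambda^i})=A_{\eta\lambda^i}^{(r)}E_{\lambda^i}^*(\overline{\lambda^i})+\sum_{k=1}^{i-1}\sum_{\lambda^k:\eta\preceq'\lambda^k\preceq'\lambda^i}A_{\eta\lambda^k}^{(r)}E_{\lambda^k}^*(\overline{\lambda^i}).
\end{equation*}
Since $E_{\lambda^i}^*(\overline{\lambda^i})\ne0$ by the defining vanishing characterisation, dividing through and rearranging gives exactly (\ref{generalcoefficientgeneralpieri}); specialising $i=1$ (empty inner sum) gives (\ref{firstcoefficientgeneralpieri}) and $i=2$ gives (\ref{2ndcoefficientgeneralpieri}). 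The recursion is well posed because the right-hand side of the formula for $A_{\eta\lambda^i}^{(r)}$ involves only the $A_{\eta\lambda^k}^{(r)}$ with $k<i$, which have already been determined; an induction on $i$ then shows all coefficients are uniquely fixed.

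For the final assertion, that $A_{\eta\lambda^i}^{(r)}=0$ when $\eta\not\preceq'\lambda^i$: this is immediate from Corollary~\ref{furthvan} (the coefficient vanishes unless $\eta\preceq'\lambda$), but it also follows self-consistently from the formula above — if $\eta\not\preceq'\lambda^i$ then $E_\eta^*(\overline{\lambda^i})=0$ by Proposition~\ref{extra vanishing condition}, killing the first term, and by transitivity of $\preceq'$ (Corollary~\ref{compostoinofprec}) any $\lambda^k$ in the inner sum would force $\eta\preceq'\lambda^i$, so the inner sums are empty as well, leaving $A_{\eta\lambda^i}^{(r)}E_{\lambda^i}^*(\overline{\lambda^i})=0$ and hence $A_{\eta\lambda^i}^{(r)}=0$.

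I do not expect a serious obstacle here: the argument is a direct evaluation combined with the extra-vanishing theorem already proved in Section~\ref{insm}. The one point requiring a little care is the bookkeeping of which $\mu$ survive the evaluation at $\overline{\lambda^i}$ — separating the cases $|\mu|>|\lambda^i|$, $|\mu|=|\lambda^i|$, and $|\mu|<|\lambda^i|$, and invoking the ordinary vanishing property in the first two cases and the extra vanishing property (Proposition~\ref{extra vanishing condition}) in the third — together with confirming that $E_{\lambda^i}^*(\overline{\lambda^i})\ne0$ so the division is legitimate. Beyond that the result is essentially a formal consequence of triangularity.
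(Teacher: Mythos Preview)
Your proposal is correct and follows essentially the same approach as the paper: evaluate the interpolation expansion at $\overline{\lambda^i}$, use the defining and extra vanishing properties to isolate the surviving terms, and solve for $A_{\eta\lambda^i}^{(r)}$ recursively. The only minor slip is the citation of Corollary~\ref{furthvan} for the restriction $\eta\preceq'\lambda$ in the interpolation expansion --- that restriction is actually obtained directly from Proposition~\ref{extra vanishing condition} (as in the passage leading to (\ref{basicint2})), not from the orthogonality argument --- but this does not affect the substance of your argument.
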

\begin{proof}
We first consider the structure of the coefficients. By the vanishing properties of $E^*_{\lambda^i}$ evaluating (\ref{nonzerocoefficientspieriint2}) at $\overline{\lambda^1}$ gives (\ref{firstcoefficientgeneralpieri}). When we evaluate (\ref{nonzerocoefficientspieriint2}) at $\overline{\lambda^2}$ we obtain
\begin{equation} \label{2ndcoefficientgeneralpieriderivation}
(e_r(\overline{\lambda^2})-e_r(\overline{\eta}))E_\eta^*(\overline{\lambda^2})=A_{\eta \lambda^2}^{(r)}E^*_{\lambda^2}(\overline{\lambda^2})+\sum_{\lambda^1:\eta \preceq' \lambda^1 \preceq \lambda^2 }A_{\eta \lambda^1}^{(r)}E^*_{\lambda^1}(\overline{\lambda^2}), 
\end{equation}
since any $E_{\lambda^1}$ such that $\lambda^1 \preceq \lambda^2 $ will not vanish when evaluated at $\overline{\lambda^2}$. Rearranging (\ref{2ndcoefficientgeneralpieriderivation}) gives (\ref{2ndcoefficientgeneralpieri}). The general coefficient formula (\ref{generalcoefficientgeneralpieri}) is derived using the same methods, recursively generating $A_{\eta \lambda^1}^{(r)}, \dots, A_{\eta \lambda^{i-1}}^{(r)}$ to determine  $A_{\eta \lambda^{i}}^{(r)}$.

The claim that for $\eta \not\preceq' \lambda^i$ we have $A_{\eta \lambda^i}^{(r)}=0$ is a due to the vanishing properties of $E^*_{\lambda^i}$ and Corollary \ref{compostoinofprec}. If $\eta \not\preceq' \lambda^i$ the vanishing properties would cause the leading term of (\ref{generalcoefficientgeneralpieri}) to vanish and by Corollary \ref{compostoinofprec} there would be no such $\lambda^k$ that satisfied $ \eta \preceq' \lambda^k \preceq' \lambda^i$.
\hfill $\square$ \end{proof} 

\begin{corollary}
The coefficients $A_{\eta \lambda^r}^{(r)}$ in (\ref{nonzerocoefficientspieri}), where $\eta\preceq'\lambda^r\preceq'\eta+(1^n)$, satisfy the recursion (\ref{generalcoefficientgeneralpieri}) with $i=r,$
\begin{equation*}\label{nsbegin3}
A_{\eta \lambda^r}^{(r)}=\frac{(e_r(\overline{\lambda^r})-e_r(\overline{\eta}))E_\eta^*(\overline{\lambda^r})}{E_{\lambda^r}^*(\overline{\lambda^r})}-\sum_{k=1}^{r-1}\sum_{\lambda^k: \eta \preceq' \lambda^k \preceq' \lambda^r}A_{\eta \lambda^k}^{(r)}\frac{E^*_{\lambda^k}(\overline{\lambda^r})}{E_{\lambda^r}^*(\overline{\lambda^r})}.
\end{equation*}
\end{corollary}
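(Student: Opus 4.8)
The plan is to obtain the statement directly from Theorem \ref{generalpiericoefficients} by specialising the recursion to $i=r$, once we recall why the coefficients in the interpolation expansion are the ones that appear in (\ref{nonzerocoefficientspieri}). First I would recall the observation made at the start of Section \ref{structure}: the multiplier $e_r(z)-e_r(\overline{\eta})$ has top homogeneous component $e_r(z)$, the top homogeneous component of $E_\eta^*(z;q,t)$ is $E_\eta(z;q^{-1},t^{-1})$, and for $|\lambda^r|=|\eta|+r$ the top homogeneous component of $E_{\lambda^r}^*(z;q,t)$ is $E_{\lambda^r}(z;q^{-1},t^{-1})$. Extracting the degree-$(|\eta|+r)$ part of both sides of (\ref{basicint}), equivalently of (\ref{nonzerocoefficientspieriint2}), therefore reproduces (\ref{nonzero}) with exactly the coefficients $A_{\eta\lambda^r}^{(r)}(q,t)$ attached to the top-modulus compositions $\lambda^r$. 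So it suffices to describe these particular coefficients.

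Next I would invoke Theorem \ref{generalpiericoefficients} with $i=r$. For every $\lambda^r$ with $|\lambda^r|=|\eta|+r$ and $\eta\preceq'\lambda^r$ this yields precisely the recursion (\ref{generalcoefficientgeneralpieri}) with $i=r$, namely the formula displayed in the corollary, while $A_{\eta\lambda^r}^{(r)}=0$ whenever $\eta\not\preceq'\lambda^r$. To match the summation range in (\ref{nonzerocoefficientspieri}), I would then invoke Corollary \ref{furthvan}, which---via the orthogonality identity of Proposition \ref{relating coefficients}, and hence independently of the recursion---shows $A_{\eta\lambda^r}^{(r)}=0$ unless in addition $\lambda^r\preceq'\eta+(1^n)$. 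Thus the sum in (\ref{nonzerocoefficientspieri}) is supported on the $\lambda^r$ with $\eta\preceq'\lambda^r\preceq'\eta+(1^n)$, and on each such $\lambda^r$ the coefficient obeys the asserted recursion.

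I do not expect a genuine obstacle: this is a bookkeeping corollary that splices Theorem \ref{generalpiericoefficients} (the recursion, valid for all $i$, in particular $i=r$) together with Corollary \ref{furthvan} (the extra constraint $\lambda^r\preceq'\eta+(1^n)$). The only point worth a line of comment is internal consistency: the recursion with $i=r$ assigns a value to $A_{\eta\lambda^r}^{(r)}$ for every $\lambda^r$ with $\eta\preceq'\lambda^r$, and this value is automatically zero on the $\lambda^r$ excluded by Corollary \ref{furthvan}; no separate verification is needed, since both descriptions compute the same honestly defined expansion coefficient of (\ref{nonzerocoefficientspieriint2}).
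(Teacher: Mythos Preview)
Your proposal is correct and matches the paper's approach: the corollary is stated in the paper without proof, as an immediate specialisation of Theorem \ref{generalpiericoefficients} to $i=r$ combined with the identification of interpolation and nonsymmetric Macdonald coefficients from Section \ref{structure} and the support restriction from Corollary \ref{furthvan}. You have simply spelled out the bookkeeping the paper leaves implicit.
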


To use this to obtain explicit formulas for the coefficients $A_{\eta \lambda^i}^{(r)}$ we require formulas for the evaluation of $E^*_{\lambda^i}(\overline{\lambda^j})$, where $i<j$. The evaluation of $E_{\lambda^i}^*(\overline{\lambda^i})$ follows from the general formula \cite{wendyb}  
$$
E^*_\eta(\overline{\eta})=d_\eta'(q^{-1},t^{-1})\prod_{i=1}^n\overline{\eta}_i^{\eta_i}.
$$

\begin{proposition} \label{binomialcoeff}
Let $D_{I_k}(\eta):=\sigma_{k}\ldots \sigma_{n-1}\Phi \sigma_{1}\ldots \sigma_{k-1}(\eta)$ where $I_k\subseteq\{1,\ldots,n-1\}$ and $\sigma_j=s_j$ if $j\in I_k$. With $D_k(\lambda^i):=\{\nu: \nu=D_{I_k}(\eta)\}$ for some $I_k \subseteq\{1,\ldots,n-1\}$, and $\lambda^i \preceq' \lambda^j$ with $i<j-1$, we have
\begin{equation*} \label{computingallbinomials}
E_{\lambda^i}^*(\overline{\lambda^j})=\sum_{\substack{\nu\in D_k(\lambda^i),  \\ \nu \preceq' \lambda^j}}\frac{\overline{\nu}_k\overline{\lambda^i}_k^{-1}-1}{\overline{\lambda^j}_k\overline{\lambda^i}_k^{-1}-1}\frac{E_{\lambda^i}^*(\overline{\nu})}{E_\nu^*(\overline{\nu})}E^*_\nu(\overline{\lambda^j}),
\end{equation*}
where k is the position of the leftmost component of $\lambda^i$ that does not occur with the same frequency in $\lambda^j.$
\end{proposition}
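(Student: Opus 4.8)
The plan is to read off the recursion from the eigenoperator identity (\ref{eopact}), applied with a carefully chosen index and then evaluated at two different points. Fix $k$ as in the statement. Writing $z_k\Xi_k-1=H_k\ldots H_{n-1}\Phi H_1\ldots H_{k-1}$ and letting both sides act on $E^*_{\lambda^i}$, the recursive generation rules (\ref{hecke on interpolation}) and (\ref{raising on interpolation}) show that
\[
(\overline{\lambda^i}_k^{-1}z_k-1)E^*_{\lambda^i}(z)=\sum_{\nu\in D_k(\lambda^i)}c_{\lambda^i\nu}\,E^*_\nu(z),\qquad c_{\lambda^i\nu}\in\mathbb{Q}(q,t),
\]
where every $\nu$ occurring has modulus $|\lambda^i|+1$ and the index set $D_k(\lambda^i)$ is, by design, exactly the set of compositions produced by the operator string $\sigma_k\ldots\sigma_{n-1}\Phi\,\sigma_1\ldots\sigma_{k-1}$ with each $\sigma_j\in\{1,s_j\}$. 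Evaluating this identity at $\overline{\lambda^j}$ and invoking the Extra Vanishing Theorem (Proposition \ref{extra vanishing condition}) to discard the terms with $\nu\not\preceq'\lambda^j$ gives
\[
(\overline{\lambda^j}_k\overline{\lambda^i}_k^{-1}-1)E^*_{\lambda^i}(\overline{\lambda^j})=\sum_{\substack{\nu\in D_k(\lambda^i)\\ \nu\preceq'\lambda^j}}c_{\lambda^i\nu}\,E^*_\nu(\overline{\lambda^j}).
\]

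The next step is to pin down each coefficient $c_{\lambda^i\nu}$ by evaluating the same identity at $\overline{\nu}$ rather than at $\overline{\lambda^j}$. Since every composition on the right-hand side has modulus $|\lambda^i|+1=|\nu|$, the defining vanishing conditions of the interpolation polynomials annihilate $E^*_{\nu'}(\overline{\nu})$ for all $\nu'\neq\nu$, so only the $\nu$-term survives and
\[
(\overline{\nu}_k\overline{\lambda^i}_k^{-1}-1)E^*_{\lambda^i}(\overline{\nu})=c_{\lambda^i\nu}\,E^*_\nu(\overline{\nu}),
\]
whence $c_{\lambda^i\nu}=(\overline{\nu}_k\overline{\lambda^i}_k^{-1}-1)E^*_{\lambda^i}(\overline{\nu})/E^*_\nu(\overline{\nu})$, the denominator being nonzero because $E^*_\nu(\overline{\nu})\neq0$. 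Substituting this back into the previous display and dividing by $\overline{\lambda^j}_k\overline{\lambda^i}_k^{-1}-1$ yields (\ref{computingallbinomials}).

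The step I expect to need genuine care is the last division: one must check that $\overline{\lambda^j}_k\overline{\lambda^i}_k^{-1}-1\neq0$ over $\mathbb{Q}(q,t)$. This is the whole point of choosing $k$ to be the leftmost position at which the value $\lambda^i_k$ occurs with a different frequency in $\lambda^j$ than in $\lambda^i$: either $\lambda^j_k\neq\lambda^i_k$, and then $\overline{\lambda^j}_k/\overline{\lambda^i}_k$ carries a nontrivial power of $q$, or $\lambda^j_k=\lambda^i_k$ but the leg-colengths $l'_{\lambda^i}(k)$ and $l'_{\lambda^j}(k)$ differ, so the ratio is a nontrivial power of $t$; in either case it is $\neq1$. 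This is exactly the nonvanishing used in the converse part of the proof of Proposition \ref{extra vanishing condition}, which I would simply cite. Two more bookkeeping points complete the argument: identifying the compositions generated by $H_k\ldots H_{n-1}\Phi H_1\ldots H_{k-1}$ with $D_k(\lambda^i)$ (so that the notation $D_{I_k}$ matches this expansion, exactly as $c_I$ matched the expansion in Proposition \ref{successors by permutations}), and noting that the hypothesis $i<j-1$ forces $|\lambda^i|<|\nu|<|\lambda^j|$ for every $\nu$ in the sum, so that (\ref{computingallbinomials}) is a genuine reduction of the modulus gap which, together with Proposition \ref{interpolationratioprop} for the gap-one case and the known value $E^*_\eta(\overline{\eta})=d'_\eta(q^{-1},t^{-1})\prod_i\overline{\eta}_i^{\eta_i}$, recursively determines every $E^*_{\lambda^i}(\overline{\lambda^j})$ and hence the generalised binomial coefficients.
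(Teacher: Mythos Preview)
Your proposal is correct and follows essentially the same approach as the paper: both act with $z_k\Xi_k-1=H_k\ldots H_{n-1}\Phi H_1\ldots H_{k-1}$ on $E^*_{\lambda^i}$, evaluate at $\overline{\nu}$ to extract the coefficients $c_{\lambda^i\nu}$, then evaluate at $\overline{\lambda^j}$ and invoke the extra vanishing theorem to obtain the stated recursion. Your treatment is in fact slightly more explicit than the paper's in justifying why $\overline{\lambda^j}_k\overline{\lambda^i}_k^{-1}-1\neq0$ for the chosen $k$.
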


\begin{proof} The case where $j=i+1$ is given in Proposition \ref{interpolationratioprop}, and so we begin with $j=i+2$. Manipulating (\ref{XiI}) and acting on $E^*_{\lambda^i}$ gives
\begin{align}
(z_k\Xi_k-1)E^*_{\lambda^i}(z)=&\,H_k\ldots H_{n-1}\Phi H_1 \ldots H_{k-1}E^*_{\lambda^i}(z) \notag \\
=&\sum_{\nu\in D_k(\lambda^i)}c_{\eta,\nu}^kE^*_\nu(z), \label{44'}
\end{align}
where $k$ is specified above. Evaluating at $\overline{\nu}$ for $\nu$ a particular composition in the sum shows   
\begin{equation*}
c_{\eta,\nu}^k=(\overline{\nu}_k\overline{\lambda^i}_k^{-1}-1)\frac{E_{\lambda^i}^*(\overline{\nu})}{E_{\nu}^*(\overline{\nu})}.
\end{equation*}
Substituting back in (\ref{44'})  shows
\begin{equation} \label{togetbinomial}
E^*_{\lambda^i}(z)=\sum_{\nu\in D_k(\lambda^i)}\frac{(\overline{\nu}_k\overline{\lambda^i}_k^{-1}-1)}{(z_k\overline{\lambda^i}_k^{-1}-1)}\frac{E_{\lambda^i}^*(\overline{\nu})}{E_{\nu}^*(\overline{\nu})}E^*_\nu(z).
\end{equation}

We compute $E_{\lambda^i}^*(\overline{\lambda^{i+2}})$ as follows. With $k$ as specified we can be sure there is at least one $\nu \in D_k({\lambda^i})$ such that $E_{\nu}^*(\overline{\lambda^{i+2}})\not = 0$. Evaluating (\ref{togetbinomial}) at $\overline{\lambda^{i+2}}$ gives 
\begin{equation} \label{togetbinomial2}
E_{\lambda^i}^*(\overline{\lambda^{i+2}})=\sum_{\substack{\nu\in D_k(\lambda^i)\\\nu \preceq' \lambda^{i+2}}}\frac{\overline{\nu}_k\overline{\lambda^i}_k^{-1}-1}{\overline{\lambda^{i+2}}_k\overline{\lambda^i}_k^{-1}-1}\frac{E_{\lambda^i}^*(\overline{\nu})}{E_\nu^*(\overline{\nu})}E^*_\nu(\overline{\lambda^{i+2}}),
\end{equation}
where the further restriction on the summation to $\nu \preceq \lambda^{i+2}$ is due to the vanishing conditions of $E^*_\nu$. Since Proposition \ref{interpolationratioprop} gives an explicit formula for each $E^*_\nu(\overline{\lambda^{i+2}})$ (\ref{togetbinomial2}) does indeed give us an explicit formula for $E_{\lambda^i}^*(\overline{\lambda^{i+2}})$. 
 
One can then evaluate (\ref{togetbinomial}) at $\overline{\lambda^{i+3}}$ and use $E_{\lambda^i}^*(\overline{\lambda^{i+2}})$ to find an explicit formula for $E_{\lambda^i}^*(\overline{\lambda^{i+3}})$. This process can be extended to allow any $E_{\lambda^i}^*(\overline{\lambda^{j}})$ where $i<j-1$ to be broken down into a combination of evaluations of the form $E_{\lambda^l}^*(\overline{\lambda^{l+1}})$, which in turn can be explicitly evaluated using (\ref{final ratio}).

\hfill $\square$ \end{proof}

This result leads us very nicely to a consequence for the generalised binomial coefficients $\binom{\nu }{\eta }_{q,t}$. These coefficients are due to Sahi \cite{sahi} and are given in terms of the interpolation Macdonald polynomials by
\begin{equation*}
\binom{\eta }{\nu }_{q,t}:=\frac{E_{\eta }^{\ast }( \overline{
\nu}) }{E_{\nu }^{\ast }( \overline{\nu }) }.
\label{generalised q-binomial}
\end{equation*}
In \cite{wendyb} an explicit formula for the case where $|\nu|=|\eta|+1$ was given, however in general there is no known explicit formula.

\begin{corollary} \label{binomcoeffexp}With $i<j-1$ we have 
\begin{equation*} \label{computingallbinomialscoeff}
\binom{\lambda^i }{\lambda^j }_{q,t}=\frac{E_{\lambda^i}^*(\overline{\lambda^j})}{E_{\lambda^j}^*(\overline{\lambda^j})}=\sum_{\substack{\nu\in D_k(\lambda^i) \\ \nu \preceq' \lambda^j}}\frac{\overline{\nu}_k\overline{\lambda^i}_k^{-1}-1}{\overline{\lambda^j}_k\overline{\lambda^i}_k^{-1}-1}\frac{E_{\lambda^i}^*(\overline{\nu})}{E_\nu^*(\overline{\nu})}\frac{E^*_\nu(\overline{\lambda^j})}{E_{\lambda^j}^*(\overline{\lambda^j})},
\end{equation*}
 where k is the position of the leftmost component of $\lambda^i$ that does not occur with the same frequency in $\lambda^j.$
\end{corollary}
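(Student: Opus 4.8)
The plan is to read this off immediately from Proposition~\ref{binomialcoeff} by dividing through by $E_{\lambda^j}^*(\overline{\lambda^j})$. First I would recall that the generalised binomial coefficient is defined by $\binom{\eta}{\nu}_{q,t}=E_\eta^*(\overline{\nu})/E_\nu^*(\overline{\nu})$, so that the middle equality in the statement, $\binom{\lambda^i}{\lambda^j}_{q,t}=E_{\lambda^i}^*(\overline{\lambda^j})/E_{\lambda^j}^*(\overline{\lambda^j})$, holds by definition. To make sense of this ratio I would note that the denominator never vanishes: by the evaluation formula $E_\eta^*(\overline{\eta})=d_\eta'(q^{-1},t^{-1})\prod_{i=1}^n\overline{\eta}_i^{\eta_i}$ quoted above, $E_{\lambda^j}^*(\overline{\lambda^j})$ is a nonzero element of $\mathbb{Q}(q,t)$, and likewise each $E_\nu^*(\overline{\nu})$ appearing in the sum is nonzero.

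Second, I would take the identity of Proposition~\ref{binomialcoeff}, valid for $\lambda^i\preceq'\lambda^j$ with $i<j-1$, namely
\[
E_{\lambda^i}^*(\overline{\lambda^j})=\sum_{\substack{\nu\in D_k(\lambda^i)\\ \nu\preceq'\lambda^j}}\frac{\overline{\nu}_k\overline{\lambda^i}_k^{-1}-1}{\overline{\lambda^j}_k\overline{\lambda^i}_k^{-1}-1}\,\frac{E_{\lambda^i}^*(\overline{\nu})}{E_\nu^*(\overline{\nu})}\,E_\nu^*(\overline{\lambda^j}),
\]
and divide both sides by $E_{\lambda^j}^*(\overline{\lambda^j})$. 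On the left this produces $\binom{\lambda^i}{\lambda^j}_{q,t}$; on the right the only factor affected is $E_\nu^*(\overline{\lambda^j})$, which becomes $E_\nu^*(\overline{\lambda^j})/E_{\lambda^j}^*(\overline{\lambda^j})$, and we arrive at precisely the displayed right-hand side. This establishes the corollary.

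Finally I would add two remarks. Writing $E_{\lambda^i}^*(\overline{\nu})/E_\nu^*(\overline{\nu})=\binom{\lambda^i}{\nu}_{q,t}$ and $E_\nu^*(\overline{\lambda^j})/E_{\lambda^j}^*(\overline{\lambda^j})=\binom{\nu}{\lambda^j}_{q,t}$, the identity becomes a recursion expressing $\binom{\lambda^i}{\lambda^j}_{q,t}$, a coefficient whose upper and lower compositions differ in modulus by $j-i\geq 2$, as a finite sum of products of binomial coefficients separated by strictly smaller modulus gaps, with base case the modulus-one evaluations of Proposition~\ref{interpolationratioprop} and explicit rational prefactors $(\overline{\nu}_k\overline{\lambda^i}_k^{-1}-1)/(\overline{\lambda^j}_k\overline{\lambda^i}_k^{-1}-1)$. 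And in the degenerate case $\lambda^i\not\preceq'\lambda^j$ the identity still holds: the left-hand side is $0$ by Proposition~\ref{extra vanishing condition}, while by Corollary~\ref{compostoinofprec} no $\nu$ can satisfy both $\lambda^i\preceq'\nu$ (forced by $\nu\in D_k(\lambda^i)$) and $\nu\preceq'\lambda^j$, so the sum is empty. There is no genuine obstacle in this argument; the only point requiring a word of care is the nonvanishing of $E_{\lambda^j}^*(\overline{\lambda^j})$, which is immediate from the quoted evaluation.
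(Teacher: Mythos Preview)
Your proposal is correct and matches the paper's intent: the corollary is stated immediately after Proposition~\ref{binomialcoeff} with no separate proof, precisely because it follows by dividing that proposition's identity through by $E_{\lambda^j}^*(\overline{\lambda^j})$ and invoking the definition of $\binom{\eta}{\nu}_{q,t}$. Your added remarks on nonvanishing of the denominator and the degenerate case $\lambda^i\not\preceq'\lambda^j$ are sound elaborations but not required.
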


Clearly the explicit formulas for the Pieri-type coefficients are rather complex. We complete the study by considering possible simplifications.

\section{Simplifying the Pieri coefficients}

First a coefficient of unity in each Pieri-type formula is identified. An analogous result was observed  earlier by Forrester and McAnnaly \cite{forresterandmcanally} within Jack polynomial theory, and identical principles apply for Macdonald polynomials. 

Forrester and McAnnaly found that with $\eta+\chi_r$ given by
\begin{equation*}
(\eta+\chi_r)_i:=\left\{ 
\begin{tabular}{ll}
$\eta_i,$ & $l_\eta'(i)\geq r$ \\ 
$\eta_i+1,$ & $l_\eta'(i)< r$,
\end{tabular}
\right. 
\end{equation*}
we have
\begin{equation*}
A_{\eta,\eta+\chi_r}^{(r)}=1.
\end{equation*}
We first give an explicit derivation in the case $r=1$ and then state their reasoning in the general case. 
\begin{proposition} 
We have
\begin{equation}\label{46'}
A_{\eta,\eta+\chi_1}^{(1)}=1.
\end{equation}
\end{proposition}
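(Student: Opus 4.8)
The plan is to use the $r=1$ specialization of the recursive formula in Theorem~\ref{generalpiericoefficients}, namely
\begin{equation*}
A_{\eta\lambda}^{(1)}=\frac{(e_1(\overline{\lambda})-e_1(\overline{\eta}))E_\eta^*(\overline{\lambda})}{E_\lambda^*(\overline{\lambda})}=(|\overline{\lambda}|-|\overline{\eta}|)\binom{\eta}{\lambda}_{q,t},
\end{equation*}
and to evaluate it at $\lambda=\eta+\chi_1$. First I would identify the set $I$ for which $c_I(\eta)=\eta+\chi_1$. Since $(\eta+\chi_1)_i=\eta_i+1$ precisely when $l'_\eta(i)<1$, i.e. when $l'_\eta(i)=0$, the node being added is in the (unique) row with smallest $l'_\eta$-value among rows of its length; more concretely, writing $\eta_{t_1}$ for the first part with $l'_\eta(t_1)=0$ and collecting into $I=\{t_1,\ldots,t_s\}$ all positions that must be cycled, one checks $c_I(\eta)=\eta+\chi_1$ with $I$ maximal with respect to $\eta$. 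I would verify this directly from the definition of $c_I$ in (\ref{cIn}) and of $\chi_r$: the parts of $\eta$ equal to $\eta_{t_1}+1$ are exactly those that are not incremented, which forces the maximality condition (2), and condition (1) is similarly automatic.

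With $\lambda=c_I(\eta)$, $I$ maximal, Proposition~\ref{interpolationratioprop} gives the explicit ratio
\begin{equation*}
\binom{\eta}{\lambda}_{q,t}=\frac{E_\eta^*(\overline{\lambda})}{E_\lambda^*(\overline{\lambda})}=\frac{q^{-\eta_{t_1}}\delta(\eta,I)\beta(\eta,I)}{1-t},
\end{equation*}
so that $A_{\eta,\eta+\chi_1}^{(1)}=\dfrac{(|\overline{\lambda}|-|\overline{\eta}|)q^{-\eta_{t_1}}\delta(\eta,I)\beta(\eta,I)}{1-t}$. The task then reduces to showing this product equals $1$. The key simplification is that $\beta(\eta,I)=1$: the product in (\ref{beta new}) runs over $i\notin I$ with $\eta_i>\eta_{t_u(i)}$, but for this particular $I$ every part that is ``skipped'' by the cycling equals $\eta_{t_1}+1$ (for $i>t_s$) or equals $\eta_{t_u(i)}$ (for $t_1<i<t_s$), so there are no indices $i$ with a strict inequality and the product is empty. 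I would justify this carefully using the definition of $\chi_1$ together with the leg-colength bookkeeping: a part strictly larger than $\eta_{t_u(i)}$ lying outside $I$ would have to have been incremented, contradicting that it has $l'_\eta\geq1$ yet sits ``above'' the added box.

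It then remains to check $(|\overline{\lambda}|-|\overline{\eta}|)\,\delta(\eta,I)=1-t$, equivalently
\begin{equation*}
\big(|\overline{\lambda}|-|\overline{\eta}|\big)\prod_{u=1}^{s}\frac{1}{1-\overline{\lambda}_{t_u}\overline{\eta}_{t_u}^{-1}}=\frac{1}{q^{-\eta_{t_1}}}\cdot\frac{1-t}{\text{(the }u=1\text{ factor of }\delta\text{)}}\cdots,
\end{equation*}
which I expect to be the main obstacle: one must compute $|\overline{\lambda}|-|\overline{\eta}|$ as a telescoping sum. The point is that $\overline{\lambda}$ and $\overline{\eta}$ differ only in the coordinates indexed by $I$; writing out $\overline{\lambda}_{t_u}=\overline{\eta}_{t_{u+1}}$ for $u<s$ (from the cyclic shift of parts) and $\overline{\lambda}_{t_s}=q\,\overline{\eta}_{t_1}\cdot t^{(\text{shift in }l')}$, the difference $\sum_{u}(\overline{\lambda}_{t_u}-\overline{\eta}_{t_u})$ telescopes, and after factoring, the surviving terms cancel against the denominators $1-\overline{\lambda}_{t_u}\overline{\eta}_{t_u}^{-1}$ in $\delta(\eta,I)$, leaving exactly $(1-t)q^{\eta_{t_1}}$. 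I would organize this by setting $x_u:=\overline{\eta}_{t_u}$ and recording $\overline{\lambda}_{t_u}/\overline{\eta}_{t_u}$ in each case ($u<s$ versus $u=s$, and noting the extra factor of $t^{-1}$ coming from the changed leg-colength at position $t_1$), then recognizing the resulting rational-function identity as a standard partial-fractions/telescoping identity. Alternatively, and perhaps more cleanly, one can bypass the computation entirely by invoking the known $r=1$ formula of \cite{wendyb} (or Corollary~\ref{pierireq1}) and checking that its specialization at $\lambda=\eta+\chi_1$ is visibly $1$; I would present the direct computation but remark on this shortcut.
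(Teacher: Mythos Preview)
Your route through Proposition~\ref{interpolationratioprop} is valid, but you are making it far harder than necessary by treating $I$ as though it could have several elements. There is exactly one index $i$ with $l'_\eta(i)=0$ (the position of the leftmost occurrence of the maximal part of $\eta$), so $\eta+\chi_1$ differs from $\eta$ in that single coordinate only and the relevant maximal set is the singleton $I=\{i\}$. With $s=1$ there is no cycling at all: $\delta(\eta,I)$ is the single factor $(t-1)/(1-q)$; the product defining $\beta(\eta,I)$ is empty because $\eta_j<\eta_i$ for $j<i$ and $\eta_j\le\eta_i<\eta_i+1$ for $j>i$; and $|\overline{\lambda}|-|\overline{\eta}|=q^{\eta_i+1}-q^{\eta_i}=(q-1)q^{\eta_i}$. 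Substituting into Corollary~\ref{pierireq1} gives $A^{(1)}_{\eta,\eta+\chi_1}=1$ in one line. The telescoping and partial-fractions argument you describe as ``the main obstacle'' never arises.

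The paper's proof is shorter still and bypasses Proposition~\ref{interpolationratioprop} entirely. It returns to the identity $(z_i\Xi_i-1)E_\eta^{\ast}=H_i\cdots H_{n-1}\Phi H_1\cdots H_{i-1}E_\eta^{\ast}$ with this particular $i$. Because $\eta_i$ is maximal, every Hecke operator in the chain swaps with a strictly smaller part and so contributes coefficient $1$ to the relevant branch; hence the coefficient of $E^{\ast}_{\eta+\chi_1}$ on the right is just the $q^{-\eta_i}$ coming from $\Phi$. Evaluating both sides at $\overline{\eta+\chi_1}$ and using $\overline{(\eta+\chi_1)}_i/\overline{\eta}_i=q$ gives
\[
\frac{E_\eta^{\ast}(\overline{\eta+\chi_1})}{E_{\eta+\chi_1}^{\ast}(\overline{\eta+\chi_1})}=\frac{1}{(q-1)q^{\eta_i}}=\frac{1}{e_1(\overline{\eta+\chi_1})-e_1(\overline{\eta})},
\]
whence $A^{(1)}_{\eta,\eta+\chi_1}=1$ by~(\ref{firstcoefficientgeneralpieri}). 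Your approach and the paper's are really the same computation --- Proposition~\ref{interpolationratioprop} is itself derived from this eigenoperator identity --- but the paper exploits the special choice of $i$ directly rather than routing through the general $\delta,\beta$ machinery.
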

\begin{proof}
Let $\eta_i$ be such that $l_\eta'(i)=0$. By (\ref{hecke on interpolation}) and (\ref{raising on interpolation}) the coefficient of $E^*_{\eta+\chi_1}(z)$ in the expansion of $H_i\ldots H_{n-1} \Phi H_1\ldots H_{i-1}E_\eta^*(z)$ will be $q^{-\eta_i}$. Using the vanishing properties of $E^*$ and 
\begin{equation*} \label{difviewofeigenoperator}
z_{i}\Xi _{i}-1=H_{i}\ldots H_{n-1}\Phi H_{1}\ldots H_{i-1}
\end{equation*}
 we have
\begin{equation*}
\frac{E^*_{\eta}(\overline{\eta+\chi_1})}{E^*_{\eta+\chi_1}(\overline{\eta+\chi_1})}=\frac{1}{(q-1)q^{\eta_i}}=\frac{1}{e_1(\overline{\lambda^1})-e_1(\overline{\eta})},
\end{equation*}
which upon substitution in (\ref{firstcoefficientgeneralpieri}) implies (\ref{46'})
\hfill $\square$ \end{proof}
\begin{proposition} \rm{\cite{forresterandmcanally}} We have
\begin{equation*}
A_{\eta,\eta+\chi_r}^{(r)}=1.
\end{equation*}
\end{proposition}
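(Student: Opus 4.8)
The plan is to reduce the general $r$ statement to the $r=1$ case already proved, by an inductive/multiplicative argument using the recursive structure of the coefficients in Theorem~\ref{generalpiericoefficients}. The key observation is that $\eta+\chi_r$ is the unique composition $\lambda$ with $|\lambda|=|\eta|+r$ that is reached from $\eta$ by increasing, for each of the $r$ successive applications of a $c_I$-type operation, the component whose leg colength is currently smallest; equivalently $\eta+\chi_r$ is the \emph{minimal} successor of $\eta$ at distance $r$ in the $\preceq'$ order. Concretely, I would first check that $\eta+\chi_r = c_{I_r}\cdots c_{I_1}(\eta)$ for a uniquely determined chain of maximal sets, and that at each intermediate stage the composition is exactly $\eta+\chi_k$; this is the combinatorial heart and follows by tracking how $l'_\eta(i)$ changes when a box is added.

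Next I would argue that in the recursion (\ref{generalcoefficientgeneralpieri}) applied with $\lambda^i = \eta+\chi_r$ at the top level, the minimality of $\eta+\chi_r$ forces almost all of the correction terms to vanish: the only $\lambda^k$ with $\eta\preceq'\lambda^k\preceq'\eta+\chi_r$ are precisely the $\eta+\chi_k$, and among these the evaluations $E^*_{\lambda^k}(\overline{\eta+\chi_r})$ that survive do so with coefficients that telescope. More precisely, I would prove by induction on $r$ that $A^{(r)}_{\eta,\eta+\chi_r}=1$ by showing that the leading term $(e_r(\overline{\eta+\chi_r})-e_r(\overline{\eta}))E_\eta^*(\overline{\eta+\chi_r})/E^*_{\eta+\chi_r}(\overline{\eta+\chi_r})$ equals $1$ plus exactly the sum that the correction terms subtract. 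The evaluation ratio $E_\eta^*(\overline{\eta+\chi_r})/E^*_{\eta+\chi_r}(\overline{\eta+\chi_r})$ factors, via iterated use of Proposition~\ref{interpolationratioprop} along the chain $\eta\to\eta+\chi_1\to\cdots\to\eta+\chi_r$, into a product of the $r=1$ ratios, each of the form $1/(e_1(\overline{\eta+\chi_k})-e_1(\overline{\eta+\chi_{k-1}}))$ up to the $\delta,\beta$ factors, and here the crucial simplification is that $\beta(\cdot,\cdot)=1$ and $\delta(\cdot,\cdot)$ collapses to a single factor at each step because the set $I$ used is a singleton $\{i\}$ with $l'(i)$ minimal, so no component lies strictly above it in value among those being permuted.

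The main obstacle I anticipate is bookkeeping the constant-term/eigenvalue arithmetic: showing that the difference $e_r(\overline{\eta+\chi_r})-e_r(\overline{\eta})$ of elementary symmetric functions evaluated at the two eigenvalue tuples telescopes correctly against the product of the $r$ single-box differences, since $e_r$ is not additive. I would handle this by writing $e_r(\overline{\eta+\chi_r})-e_r(\overline{\eta})$ as a sum over which boxes have been added and factoring out the common structure, or alternatively by sidestepping it entirely: simply cite that the identical argument works for Jack polynomials in \cite{forresterandmcanally} and that the Macdonald case differs only in replacing $k$-shifts by $q^k t^{-l'}$-shifts throughout, so the vanishing of all non-leading terms in the recursion is structurally the same. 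Given that the proposition is attributed to \cite{forresterandmcanally}, the cleanest writeup is likely to state that the $r=1$ derivation above generalises verbatim, with $e_1$ replaced by $e_r$ and the recursion (\ref{generalcoefficientgeneralpieri}) replacing (\ref{firstcoefficientgeneralpieri}), the extra correction terms vanishing because no composition strictly between $\eta$ and $\eta+\chi_r$ other than the chain $\eta+\chi_k$ contributes, and on that chain the coefficients are inductively $1$.
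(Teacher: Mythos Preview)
Your approach is genuinely different from the paper's, and it has a real gap. The central combinatorial claim --- that the only compositions $\lambda^k$ with $\eta\preceq'\lambda^k\preceq'\eta+\chi_r$ are the chain members $\eta+\chi_k$ --- is false. Take $\eta=(2,1,0)$ and $r=2$, so $\eta+\chi_2=(3,2,0)$; then $(2,2,0)$ satisfies $\eta\preceq'(2,2,0)\preceq'(3,2,0)$ (identity permutation works both times) but is not $\eta+\chi_1=(3,1,0)$. So the recursion (\ref{generalcoefficientgeneralpieri}) with $\lambda^r=\eta+\chi_r$ picks up many correction terms beyond the chain, and none of them vanish by the extra vanishing theorem since they genuinely sit below $\eta+\chi_r$ in $\preceq'$. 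Proving that this full sum collapses to $1$ would require controlling all these terms and the non-additive differences $e_r(\overline{\lambda})-e_r(\overline{\eta})$ simultaneously, which you correctly flag as an obstacle but do not resolve. The confusion seems to be about the role of $\eta+\chi_r$: it is not minimal in $\preceq'$ among the successors at distance $r$, it is \emph{maximal in the dominance order} $\prec$ among them.

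The paper's proof bypasses the recursion entirely and exploits exactly this last fact. Since $E_\eta$ has leading monomial $z^\eta$ with coefficient $1$ by (\ref{forma}), the product $e_r(z)E_\eta(z)$ contains $z^{\eta+\chi_r}$ with coefficient $1$. Every other $\lambda$ in the expansion (\ref{nonzerocoefficientspieri}) satisfies $\lambda\prec\eta+\chi_r$, so by the triangular structure (\ref{forma}) the monomial $z^{\eta+\chi_r}$ appears in none of the other $E_\lambda$ on the right. Matching coefficients of $z^{\eta+\chi_r}$ gives $A^{(r)}_{\eta,\eta+\chi_r}=1$ in one line. This is both shorter and conceptually cleaner than any argument through the interpolation recursion.
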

\begin{proof}
By definition of the Macondald polynomials the coefficient of $z^\eta$ in $E_\eta(z)$ is unity, and consequently the coefficient of $z^{\eta+\chi_r}$ in $e_r(z)E_\eta(z)$ is unity also. Since $\lambda^r \prec \eta+\chi_r$ for all $\lambda^r\not=\eta+\chi_r$ such that $\eta \preceq' \lambda^r \preceq' \eta+(1^n)$ the triangular structure of Macdonald polynomials (\ref{forma}) ensures that the monomial $z^{\eta+\chi_r}$ will only occur in $E_{\eta+\chi_r(z)}$, forcing $A_{\eta,\eta+\chi_r}^{(r)}$ to be unity.
\hfill $\square$ \end{proof}

We can also greatly simplify the coefficients $A_{\eta\lambda^{r}}^{r}(q,t)$ in the case where $r>\lceil \frac{n}{2}\rceil$. By (\ref{coefficient relationship}) we can reduce $A_{\eta\lambda^{r}}^{r}(q,t)$ to 
$$
A_{\lambda^r,\eta+(1^n)}^{(n-r)}(q^{-1},t^{-1})\frac{\mathcal{N}_\eta}{\mathcal{N}_{\lambda^r}}.
$$  
This is a simplification as there are less steps required to obtain $A^{(r-n)}$ than $A^{(r)}$ when $r>\lceil \frac{n}{2}\rceil$.
 

There is some freedom in the implementation of the recurrences, and we have investigated ways to reduce the required number of calculations. For example, the $k$ specified in the formulas of Proposition \ref{binomialcoeff} is not the only such $k$ that will provide a pathway to the explicit formula of $E^*_{\lambda^i}(\overline{\lambda^j})$, where $i<j$. The only requirement on $k$ is that $D_k(\lambda^i)$ contains a $\nu$ such that $\nu \preceq\lambda^{j}$. If one was to compute $E^*_{\lambda^i}(\overline{\lambda^j})$ it would be most efficient to choose $k$ such that the number of $\nu\in D_k(\lambda^i)$ such that $\nu\not\preceq' \lambda^j$ is maximised and thus minimising the number of computations by increasing the number of vanishing terms. At this stage there doesn't seem to be an obvious way of choosing such a $k$ and the problem remains open.

It is clear from trial computations that the Pieri-type coefficients can sometimes be expressed as a product. This is always true for the case $r=1$. Unfortunately our recursive formulas shed no light on the general requirement for a product formula to hold true. 

With this last point in mind we conclude our discussion by giving one further expression for the Pieri-type coefficients in the case $r=1$. These formulas are analogous to those given in \cite{forresterandmcanally} for the Jack polynomials. We are aware of some Ansatz analysis into generalisations of these formulas for cases beyond $r=1$, and include these formulas to hopefully motivate further investigations. We note also that the formulas given in \cite{forresterandmcanally} contain some typographical errors. They can be corrected following the derivation provided here.

Earlier we stated that the formulas for $A^{(1)}_{\eta \lambda}$ obtained in Proposition \ref{interpolationratioprop} were equivalent to those found in \cite{wendyb}. Here we state the latter formulas and then show how they can be expressed in a form suitable for generalisation. Define
$$
\widehat{a}(x,y):=\frac{(t-1)x}{x-y}, \hspace{0.5cm}\widehat{b}(x,y):=\frac{x-ty}{x-y},
$$
and with $I=\{t_1,\ldots,t_s\}$ such that $1 \leq t_1<\ldots<t_s\leq n,$ define
\begin{align*}
A_I(z):=&\widehat{a}\left( \frac{z_{t_{s}}}{q}%
,z_{t_{1}}\right) \prod\limits_{u=1}^{s-1}\widehat{a}\left(
z_{t_{u}},z_{t_{u+1}}\right), \\
\widetilde{B}_I\left( z\right)
:=&\prod\limits_{u=1}^{s}\prod\limits_{j=t_{u-1}+1}^{t_{u}-1}\widehat{b}%
\left( z_{t_{u}},z_{j}\right) \prod\limits_{j=t_{s}+1}^{n}\widehat{b}\left(
qz_{t_{1}},z_{j}\right)  \\
&\times \left( qz_{t_{1}}-t^{-n+1}\right) ,\text{ }t_{0}:=0.
\end{align*}
In this notation it was shown in \cite{wendyb} that
\begin{equation}\label{newpieri}
e_1(z)E_\eta(z;,q^{-1},t^{-1})=\sum_{\lambda=c_I(\eta)}\frac{(1-q)d_\eta'(q^{-1},t^{-1})A_I(\overline{\eta})\widetilde{B}_I(\overline{\eta})}{d_\lambda'(q^{-1},t^{-1})q^{t_1+1}(t-1)}E_\lambda(z;,q^{-1},t^{-1}).
\end{equation}

By introducing the sets $G_0$ and $G_1$
\begin{align*}
G_0&:=G_0(\eta,\lambda)=\{i\in\{1,\ldots, n\}; \lambda_{\sigma(i)}=\eta_i \},
\\ G_1&:=G_1(\eta,\lambda)=\{i\in\{1,\ldots, n\};\lambda_{\sigma(i)}=\eta_i+1 \},
\end{align*}
where $\sigma$ is the defining permutation of $\eta\preceq'\lambda$, we can rewrite $A_I(\overline{\eta})$ and $\widetilde{B}_I(\overline{\eta})$ as
\begin{align*}
A_I(\overline{\eta})B_I(\overline{\eta})=&\prod_{\sigma(j)<j}\frac{(t-1)\overline{\eta}_{\sigma(j)}}{\overline{\eta}_{\sigma(j)}-\overline{\eta}_{j}}\bigg( \prod_{j\in G_1}\frac{(t-1)\overline{\eta}_{\sigma(j)}}{\overline{\eta}_{\sigma(j)}-q \overline{\eta}_{j}} (q\overline{\eta}_j-t^{-n+1}) \bigg)\\
&\times \prod_{\sigma(j)<k<j}\frac{\overline{\eta}_j-t\overline{\eta}_k}{\overline{\eta}_j-\overline{\eta}_k}\prod_{j\in G_1, k<j}\frac{\overline{\eta}_j-t\overline{\eta}_k}{\overline{\eta}_j-t\overline{\eta}_k}\prod_{\substack{k\in G_0,j\in G_1\\\sigma(j)<k}}\frac{q\overline{\eta}_j-t\overline{\eta}_k}{q\overline{\eta}_j-\overline{\eta}_k}.
\end{align*}

These formulas can be substituted in (\ref{newpieri}) to give a new viewpoint on the Pieri-type coefficients for $r=1$.

\subsection*{Acknowledement.}
I am very thankful for my supervisor Peter Forrester's support. Throughout the past two years he has taught the importance of persistence in mathematic research, and this work is a consequence of such teachings. I thank Alain Lascoux for useful discussion. This work was supported by an APA scholarship, the ARC and the University of Melbourne.

\bibliographystyle{plain}

\end{document}